\newtheorem{thm}{Theorem}[section]
\newtheorem{cor}[thm]{Corollary}
\newtheorem{lem}[thm]{Lemma}
\newtheorem{prop}[thm]{Proposition}
\theoremstyle{definition}
\theoremstyle{remark}
\numberwithin{equation}{section}
\def\diver{\mathop{\text{\normalfont div}}}
\def\dist{\mathop{\text{\normalfont dist}}}
\subjclass[2020]{35P20, 46E30, 35R11,47J10}
\begin{document}

\title{Lower bounds for fractional Orlicz-type eigenvalues	}

\author[ A. Salort]{  Ariel   Salort}
 
\address{A. Salort \newline Departamento de Matem\'aticas y Ciencia de Datos, Universidad San Pablo-CEU, CEU Universities, Urbanizaci\'on Montepr\'incipe, 28660 Boadilla del Monte, Madrid, Spain. }
\email{{\tt amsalort@gmail.com, ariel.salort@ceu.es}\hfill\break\indent {\it Web page:} \url{https://sites.google.com/view/amsalort/}}

\maketitle

\begin{abstract}
In this article, we establish precise lower bounds for the eigenvalues and critical values associated with the fractional $A-$Laplacian operator, where $A$ is a Young function. The obtained bounds are expressed in terms of the domain geometry and the growth properties of the function $A$. We emphasize that we do not assume that $A$ or its complementary function satisfies the $\Delta_2$ condition.
\end{abstract}

\section{Introduction}
One of the central problems in the analysis of the $
p-$Laplacian type operators is the study of its eigenvalues, which are closely related to the structure of the underlying domain and the boundary conditions imposed. In particular, the first eigenvalue 
$$
\lambda_1=\inf\left\{\int_\Omega |\nabla u|^p\,dx \text{ for  } u\in C_c^\infty(\Omega) \text{ such that }\int_\Omega \omega(x)|u|^p \,dx =1\right\}
$$
related to the nonlinear problem defined for $p>1$ as
\begin{align} \label{ec.i1}
\begin{cases}
-\diver(|\nabla u|^{p-2}\nabla u) =\lambda \omega |u|^{p-2}u &\text{ in }\Omega\\ u=0 &\text{ on }\partial\Omega
\end{cases}
\end{align}
has been extensively studied, as it provides important information about the behavior of solutions to the geometry of the domain. Here $\omega$ is a suitable weight function and $\Omega\subset \mathbb{R}^n$ denotes an open and bounded set. See for instance \cite{L1,L2}.

While upper bounds for eigenvalues have been established in a variety of settings, obtaining sharp lower bounds remains a challenging and active area of research. Lower bounds are of particular importance because they offer insights into the stability and regularity of solutions, as well as estimates for the oscillatory behavior of eigenfunctions.

In the one-dimensional case  with a weight function, lower bounds were obtained in \cite{E, LYHA,P,Pi, SL}. When $\Omega\subset \mathbb{R}^n$, $n\geq 2$ several results are known. In \cite{AL,C},  lower bounds in terms of the measure of the domain were obtained. Indeed,  when $\omega\in L^\theta(\Omega)$ satisfies
\begin{equation*}
\frac{C}{|\Omega|^{\frac{p}{n}-\frac{1}{\theta}} \|\omega\|_{L^\theta(\Omega)} } \leq \lambda_1\quad \text{ where } \theta=\begin{cases} \theta_0\in (\frac{n}{p},\infty] &\text{ when } 1<p< n\\ 1  &\text{ when }n<p,
\end{cases}
\end{equation*}
where $C=C(n,p)>0$.
In addition, in \cite{DNP, Pi}, more accurate bounds involving the inner radius $r_\Omega$ of $\Omega$ are obtained as a consequence of Lyapunov type inequalities:
\begin{align} \label{ineq.s1}
\frac{C}{r_\Omega^{p-\frac{n}{\theta}} \|\omega\|_{L^\theta (\Omega)} }&\leq \lambda_1 \quad
\text{ where } \theta=\begin{cases} \theta_0\in (\frac{n}{p},\infty] &\text{ when } 1<p< n\\ 1  &\text{ when }n<p,
\end{cases}
\end{align}
where $C=C(n,p)>0$ and $r_\Omega:=\max\{\dist(x,\partial\Omega)\colon x\in\Omega\}$ is the inradius of $\Omega$. Later, in \cite{JKS} these results were extended to the nonlocal case obtaining that when $\omega \in L^\theta(\Omega)$
\begin{align} \label{ineq.s1}
\frac{C}{r_\Omega^{sp-\frac{n}{\theta}} \|\omega\|_{L^\theta (\Omega)} }&\leq \lambda_1^s \quad
\text{ where } \theta=\begin{cases} \theta_0\in (\frac{n}{sp},\infty] &\text{ when } 1<sp< n\\ 1  &\text{ when }n<sp,\end{cases}
\end{align}
where here $\lambda_1^s$ is the first eigenvalue related to the fractional $p-$Laplacian operator of order $s\in (0,1)$.

When operators follow a growth  more general than a power law, the concept of eigenvalue becomes highly dependent on the normalization of the eigenfunction due to the potential lack of homogeneity. More precisely, equation \eqref{ec.i1} can be generalized by replacing the power $p$ with a so-called Young function: given a Young function $A$, and a bounded domain $\Omega\subset \mathbb{R}^n$, $n\geq 1$, consider the problem
\begin{align} \label{eq.a.1}
\begin{cases}
-\diver(a(|\nabla u|) \frac{\nabla u}{|\nabla u|}) = \lambda \omega a(|u|)\frac{u}{|u|} & \text{ in } \Omega\\
u=0 &\text{ on } \partial\Omega
\end{cases}
\end{align}
where $\lambda\in \mathbb{R}$ is the eigenvalue parameter, $\omega$ is a suitable weight function and $a(t)=A'(t)$, $t>0$. Observe that \eqref{eq.a.1} boils down to \eqref{ec.i1} when $A(t)=t^p$, $p>1$.

\noindent In this case, in \cite{eig1, eig2} it is  proved that given $\alpha>0$ there exists a critical value $\lambda_{1,\alpha}>0$ and a function  $u_\alpha$ such that $\int_\Omega \omega A(|u_\alpha|)\,dx=\alpha$ and $\frac{1}{\alpha}\int_\Omega A(|\nabla u_\alpha|)\,dx = \lambda_{1,\alpha}$. From this, it can be deduced the existence of an eigenvalue $\Lambda_{1,\alpha}$ with corresponding eigenfunction $u_\alpha$ in the sense that pair $(\Lambda_{1,\alpha}, u_\alpha)$ satisfies \eqref{eq.a.1} in the weak sense. The quantities $\Lambda_1$ and $\lambda_1$ are in general different and coincide  only when $A$ is homogeneous.

A first result concerning the lower bounds of \eqref{eq.a.1} can be found in \cite{pdnliapu}. In the one-dimensional case, assuming that $A$ satisfies the $\Delta_2$ condition (that is, there exists $c\geq 1$ such that $A(2t)\leq c A(t)$ for any $t>0$), the authors establish that for any $\alpha>0$,
$$
\frac{C_{p}}{\|\omega\|_{L^1(a,b)}}\leq \lambda_{1,\alpha} 
$$
where $\Omega=(a,b)\subset \mathbb{R}$, and $p>1$ is defined as $\lim_{r\to\infty}\frac{A(rt)}{A(r)}=t^{p-1}$. Similar bounds in the one-dimensional case were found in \cite{vergara} when $A$ is a  submultiplicative Young function (that is, there exists $c\geq 1$ such that $A(rt)\leq c A(r)A(t)$ for any $r,t\geq 0$).

\noindent When $\Omega\subset \mathbb{R}^n$, $n\geq 1$ and $A$ is a submultiplicative Young function, in Theorems 4.4 and 4.2 of \cite{S22} it is proved that given $\alpha>0$ there exists a computable constant $C>0$ independent of $\alpha$, depending only of $A$ and $n$ such that
\begin{align*}
\frac{1}{\alpha}\left[A\left(  \frac{ C \sigma(r_\Omega)}{A^{-1}(\|\omega\|_{L^1(\Omega)}^{-1})}\right)\right]^{-1} \leq \lambda_{1,\alpha} \quad &\text{ when } \omega\in L^1(\Omega) \text{ and } \sigma(1)<\infty,\\
\frac{1}{\alpha}\left[ A\left( \frac{C}{ A^{-1}\left(\frac{1}{\tau_A(\Omega)\|\omega\|_{L^\infty(\Omega)} }\right)   }  \right) \right]^{-1}\leq \lambda_{1,\alpha} \quad &\text{ when } w\in L^\infty(\Omega) \text{ and  } \sigma(1)=\infty
\end{align*} 
where $\sigma(t)=\int_{t^{-n}}^\infty  A^{-1}(r)r^{-(1+\frac1n)}\,dr$ and $\tau_A(\Omega):=|\Omega| (\tilde A)^{-1}(|\Omega|^{-1})$, being $\tilde A$ the complementary function of $A$. These inequalities, in the case $A(t)=t^p$, $p>1$, recover the corresponding inequalities in \eqref{ineq.s1}. 

In the last years nonlocal operator with a non-standard growth have received an increasing attention and an active community is currently working on problems involving operators defined in terms of a Young function $A(t)=\int_0^t a(\tau)\,d\tau$ having the form
$$
(-\Delta_a)^s u (x) = \text{p.v.}\, \int_{\mathbb{R}^n} a(|D^s u|)\frac{D^s u}{|D^s u|}\frac{dy}{|x-y|^n},
$$
where $s\in(0,1)$,  $D^su(x,y) = \frac{u(x)-u(y)}{|x-y|^s}$ and  p.v. stand for \emph{principal value}. This nonhomogeneous operator represents a generalization of the fractional $p-$Laplacian of order $s\in (0,1)$.
We refer to \cite{ACPS2, ACPS,ACPS24, ABC, BS21, BKO23,  C2,C1,FBSp24, FBPLS,FBS19,  FBSV2, FBSV23,  S20, S22,SV22} and references therein. In particular, the nonlocal version of problem \eqref{eq.a.1} takes the form
\begin{align}\label{eq1.intro}
\begin{cases}
(-\Delta_a)^s u= \lambda \omega\frac{a(|u|)}{|u|} u &\quad \text{ in } \Omega\\
u=0& \quad \text{ on } \partial\Omega.
\end{cases}
\end{align}
where $\lambda\in \mathbb{R}$ is the eigenvalue parameter and $\omega$ is a suitable weight function.
We refer to \cite{srati,  BKO23,choi,FBSp24,  FBSV23,S20,S22, SV22} for properties and results related with the nonlocal nonstandard growth eigenvalue problem \eqref{eq1.intro}.

As in the local case, the non-homogeneity of the problem makes the eigenvalue highly dependent of the normalization of the eigenfunction. More precisely, in \cite{S20, SV22} it is  proved that given $\alpha>0$ there exists a critical value $\lambda_{1,\alpha}^s>0$ and $u_\alpha^s$ such that $\int_\Omega \omega A(|u^s_\alpha|)\,dx=\alpha$ and $\frac{1}{\alpha}\int_\Omega A(|D^s u^s_\alpha|)\,dx = \lambda^s_{1,\alpha}$. More precisely, we consider
\begin{equation} \label{minimi.intro}
\lambda_{1,\alpha}^s = \inf\left\{\frac{1}{\alpha}\iint_{\mathbb{R}^{2n}} A(|D^s  u|)\, d\nu_n \colon u\in C_c^\infty(\Omega), \int_\Omega  \omega A(|u|)\, dx = \alpha\right\},
\end{equation}
where $d\nu_n = |x-y|^{-n}dxdy$ and $\omega$ is a suitable weight function.

\noindent From this, it can be deduced the existence of an eigenvalue $\Lambda_{1,\alpha}^s$ with corresponding eigenfunction $u_\alpha^s$ in the sense that pair $(\Lambda^s_{1,\alpha}, u^s_\alpha)$ satisfies \eqref{eq1.intro} in the weak sense, being $\Lambda_1^s$ and $\lambda_1^s$ different  when $A$ is  inhomogeneous, but comparable each other when $A$ satisfies the doubling condition.

To the best of our knowledge, estimates for  $\Lambda_{1,\alpha}^s$ and $\lambda_{1,\alpha}^s$ have not been previously studied in the literature. The goal of this article is to establish lower bounds for these quantities.

An important aspect of analyzing \eqref{eq1.intro} is whether the Young function $A$ satisfies a so-called \emph{doubling condition}.  This condition is crucial for controlling constants within the function:
\begin{itemize}
\item[$\circ$] $A$ satisfies the \emph{doubling condition near infinity} (denoted as $A\in \Delta_2^\infty$) if there exists $C_\infty\geq 2$ such that $A(2t)\leq C_\infty A(t)$ for all $t\geq T_\infty$,
\item[$\circ$] $A$ satisfies the \emph{doubling condition near zero} (denoted as $A\in \Delta_2^0$) if there exists $C_0\geq 2$ such that $A(2t)\leq C_0 A(t)$ for all $t\leq  T_0$.
\item[$\circ$] $A$ satisfies the \emph{global doubling condition} (denoted as $A\in \Delta_2$) if the previous condition and fulfilled, and it is denoted $\Delta_2=\Delta_2^0\cap \Delta_2^\infty$.
\end{itemize}
Assuming or relaxing the doubling condition introduces significant technical challenges in the analysis, such as the potential loss of reflexivity in the associated fractional Orlicz-Sobolev spaces. Moreover, imposing this condition on either the function $A$ or its conjugate $\tilde A$ is known to yield both upper and lower bounds for the corresponding Young function in terms of power functions. For further details, see Section \ref{sec.young}.

To characterize  the growth of a general Young function $A $  (which may not satisfy the doubling condition), we use the Matuszewska-Orlicz functions associated with $A$, along with the corresponding Matuszewska-Orlicz indexes, defined as follows:
$$
M_A(t)=\sup_{\alpha>0} \frac{A(\alpha t)}{A(\alpha)}, \qquad M_0(t,A)=\liminf_{\alpha\to 0^+} \frac{A(\alpha t)}{A(\alpha)},\qquad 
M_\infty(t,A)=\liminf_{\alpha\to\infty} \frac{A(\alpha t)}{A(\alpha)}
$$
$$
i (A)= \lim_{t\to\infty} \frac{\log M_A (t)}{\log t}, \qquad i_0 (A)= \lim_{t\to\infty} \frac{\log M_0 (t,A)}{\log t}, \qquad i_\infty (A)= \lim_{t\to\infty} \frac{\log M_\infty (t,A)}{\log t}.
$$
See Section \ref{sec.matu} for details and the precise definitions.

\medskip
We now present our main results. We emphasize that, unless explicitly stated otherwise, we do not assume the $\Delta_2$ condition on $A$ or its complementary function $\tilde A$.

\begin{thm}  \label{teo1.intro}
Let $s\in (0,1)$ and let $A$ be a Young function satisfying \eqref{cond1}. Let $\Omega\subset \mathbb{R}^n$ be an open bounded domain with inner radius $r_\Omega$. Given  $\omega\in L^1(\mathbb{R}^n)$ and $\alpha>0$, consider the critical value $\lambda_{1,\alpha}^s$ defined in \eqref{minimi.intro}. The following holds:
\begin{itemize}
\item[i)] There exists a unique $\alpha_0>0$ satisfying the equation
$$
\alpha_0 \lambda_{1,\alpha_0}^s = r_\Omega^n.
$$

\item[ii)] 
Assume that $i_0(A)>\frac{n}{s}$ when $\alpha\leq \alpha_0$ or $i_\infty(A) >\frac{n}{s}$ when $\alpha> \alpha_0$. Then, there exists a positive, computable constant $C=C(n,s,A)$ such that
\begin{equation*} 
\frac{C}{\|\omega\|_{L^1(\Omega)}} \frac{r_\Omega^n}{M_A(r_\Omega^s)} \le \lambda_{1,\alpha}^s.
\end{equation*}

\noindent In particular, this holds when $i_0(A)>\frac{n}{s}$ if $\alpha\ll1$ or when $i_\infty(A) >\frac{n}{s}$ if $\alpha\gg1$.
\end{itemize}

\end{thm}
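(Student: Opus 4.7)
For part (i), setting $F(\alpha):=\alpha\lambda_{1,\alpha}^s$, by \eqref{minimi.intro} one has
$$
F(\alpha) = \inf\left\{\iint_{\mathbb{R}^{2n}} A(|D^s u|)\,d\nu_n \colon u\in C_c^\infty(\Omega),\ \int_\Omega \omega A(|u|)\,dx=\alpha\right\}.
$$
Given a minimizer $u_\alpha$ (whose existence is recalled from \cite{S20, SV22}), the strictly increasing continuous map $t\mapsto \int_\Omega \omega A(|tu_\alpha|)\,dx$ sends $(0,\infty)$ onto $(0,\infty)$, so every value $\alpha'>0$ is reached by a unique rescaling $t_{\alpha'}u_\alpha$, and symmetrically starting from $u_{\alpha'}$. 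Comparing $F(\alpha')$ against $\iint A(t_{\alpha'}|D^s u_\alpha|)\,d\nu_n$ and exchanging the roles of $\alpha,\alpha'$ yields that $F$ is continuous, strictly increasing, with $\lim_{\alpha\to 0^+}F(\alpha)=0$ and $\lim_{\alpha\to\infty}F(\alpha)=\infty$. The intermediate value theorem then produces a unique $\alpha_0>0$ with $F(\alpha_0)=r_\Omega^n$.

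For part (ii), the core ingredient is an Orlicz Morrey-type embedding: under $i_0(A)>n/s$ (relevant in the regime $\alpha\le\alpha_0$) or $i_\infty(A)>n/s$ (relevant when $\alpha>\alpha_0$), every $u\in C_c^\infty(\Omega)$, extended by zero to $\mathbb{R}^n$, should satisfy
$$
A(\|u\|_{L^\infty(\mathbb{R}^n)}) \le C(n,s,A)\,\frac{M_A(r_\Omega^s)}{r_\Omega^n}\iint_{\mathbb{R}^{2n}}A(|D^s u|)\,d\nu_n.
$$
When $A(t)=t^p$ with $sp>n$ this reduces to the classical bound $\|u\|_\infty^p\le C\,r_\Omega^{sp-n}[u]_{W^{s,p}}^p$, obtained by applying the pointwise fractional H\"older estimate to a pair $(x_\star,y)$ in which $x_\star$ maximizes $|u|$ and $y\in\partial\Omega$ is a nearest boundary point, so that $|x_\star-y|=\dist(x_\star,\partial\Omega)\le r_\Omega$. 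I would adapt this same geometric template to the Orlicz setting, routing the passage from $A(|u(x_\star)-u(y)|)$ to the modular of $D^s u$ through the Matuszewska--Orlicz function $M_A$; the growth condition on the index $i_0(A)$ or $i_\infty(A)$ is precisely what makes the dyadic tail built on the annuli around $x_\star$ summable. Plugging the embedding into the constraint $\alpha=\int_\Omega \omega A(|u|)\,dx\le A(\|u\|_\infty)\|\omega\|_{L^1(\Omega)}$ gives
$$
\iint_{\mathbb{R}^{2n}} A(|D^s u|)\,d\nu_n \ge \frac{r_\Omega^n}{C\,M_A(r_\Omega^s)\,\|\omega\|_{L^1(\Omega)}}\,\alpha,
$$
and dividing by $\alpha$ and taking the infimum over admissible $u$ produces the desired bound on $\lambda_{1,\alpha}^s$. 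The two particular cases ($\alpha\ll 1$, $\alpha\gg 1$) in the last statement are immediate from part (i) together with the monotonicity of $F$.

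The main obstacle is the Orlicz Morrey embedding itself: without $\Delta_2$ on $A$ or its complementary function, $A$ cannot be compared freely across scales, so the argument has to be routed through $M_A$ and the Matuszewska--Orlicz indices rather than power comparisons. Securing the precise dependence on $r_\Omega$ in the form $M_A(r_\Omega^s)/r_\Omega^n$ (instead of a cruder factor) requires a careful scale-by-scale estimate of the nonlocal modular, and the crossover at $\alpha_0$ reflects exactly whether $\|u\|_\infty$ is small (so that $i_0(A)$ is the effective index) or large (so that $i_\infty(A)$ takes over); the value $r_\Omega^n$ in (i) is the critical size of the modular at which these two regimes meet.
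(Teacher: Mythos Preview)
Your overall strategy coincides with the paper's: part (i) via the monotonicity and continuity of $F(\alpha)=\alpha\lambda_{1,\alpha}^s$ (this is Lemma~\ref{lema.e}), and part (ii) by bounding $A(\|u_\alpha\|_\infty)$ through a Morrey-type estimate at a maximum point $x_0$ paired with $y\in\partial\Omega$, then feeding this into $\alpha\le A(\|u_\alpha\|_\infty)\|\omega\|_{L^1}$.

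The gap is in how you state and plan to obtain the Morrey inequality. You write it as
\[
A(\|u\|_{L^\infty})\le C(n,s,A)\,\frac{M_A(r_\Omega^s)}{r_\Omega^n}\iint_{\mathbb{R}^{2n}}A(|D^s u|)\,d\nu_n
\]
for \emph{every} $u\in C_c^\infty(\Omega)$, under one index assumption. This is too strong: under \eqref{cond1} alone, what holds for all $u$ is the estimate of Proposition~\ref{morrey},
\[
|u(x_0)|\le C_M\,|x_0-y|^s\,B^{-1}\!\Big(|x_0-y|^{-n}\!\iint A(|D^s u|)\,d\nu_n\Big),
\]
with the auxiliary Young function $B=\tilde E$ of \eqref{func.e}, not $A$. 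The passage $B^{-1}\to A^{-1}$ is supplied by Lemma~\ref{lema1} (iii)--(iv), and it is only valid when the argument $r=|x_0-y|\,(\iint A(|D^s u|)\,d\nu_n)^{-1/n}$ lies on the correct side of $1$. For the minimizer $u_\alpha$ this quantity is $r_\Omega(\alpha\lambda_{1,\alpha}^s)^{-1/n}$, whence the dichotomy at $\alpha_0$: it is not an artifact of the proof but the precise range in which one index condition suffices to replace $B$ by $A$. Your formulation hides this, and a direct ``dyadic tail in $A$ and $M_A$'' will run into the same obstruction: the summability of the tail is governed by the Sobolev conjugate $B$, not by $A$, unless both $i_0(A)>n/s$ and $i_\infty(A)>n/s$ hold.

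So the fix is structural rather than cosmetic: first obtain the universal Morrey bound in $B$ (cite \cite{ACPS24} or reprove it), then invoke Lemma~\ref{lema1} to convert $B^{-1}$ to $A^{-1}$ on the appropriate half-line determined by $\alpha\lessgtr\alpha_0$, and only at the very end introduce $M_A$ via the ratio $A(|u(x_0)|)/A(C^{-1}r_\Omega^{-s}|u(x_0)|)\le M_A(Cr_\Omega^s)$ together with submultiplicativity. That is exactly the route the paper takes.
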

 
In Section \ref{ejemplos}, we compute the Matuszewska-Orlicz functions and indices for several notable Young functions. With this,  we state Theorem \ref{teo1.intro} for some interesting cases:

\begin{itemize}
\item[i)] When $A(t)=t^p$, $p>1$ the eigenvalue problem becomes homogeneous, then for any $\alpha>0$, when $sp>n$:
$$
\frac{1}{\|\omega\|_{L^1(\Omega)}} \frac{C}{r_\Omega^{sp-n}} \le \lambda_{1,\alpha}^s
$$
which in some extent  recovers \eqref{ineq.s1}.

\medskip

\item[ii)]  Given $1<p<q<\infty$, consider $A(t)=\frac{t^p}{p} + \frac{t^q}{q}$. Then $A\in \Delta_2$. This gives the eigenvalue problem for the fractional $p$-$q-$Laplacian (see for instance \cite{ambrosio}).  When $\alpha\ll1$  and $sp>n$, or $\alpha\gg1$ and $sq>n$, then 
$$
\frac{1}{\|\omega\|_{L^1(\Omega)}} \frac{C}{\max\{r_\Omega^{sp-n},r_\Omega^{sq-n}\}} \le \lambda_{1,\alpha}^s.
$$

\medskip

\item[iii)]  Given $p,q,r\geq 1$, consider $A(t)=t^p \ln^r (1+t^q)$.  Then $A\in \Delta_2$. When $\alpha\ll1$  and $s(p+qr)>n$, or $\alpha\gg1$ and $sp>n$, then 
$$
\frac{1}{\|\omega\|_{L^1(\Omega)}} \frac{C}{\max\{r_\Omega^{s(p-qr)-n},r_\Omega^{sp-n}\}} \le \lambda_{1,\alpha}^s.
$$
\medskip

\item[iv)]  For $k\in\mathbb{N}$ define $\displaystyle A(t)=e^t - \sum_{j=0}^{k-1} \frac{t^j}{j!}$. Then $A\in \Delta_2^0$ but $A\notin \Delta_2^\infty$. When $\alpha\ll1$ and $sk>n$, when $r_\Omega \leq 1$ we have that
$$
\frac{1}{\|\omega\|_{L^1(\Omega)}} \frac{C}{r_\Omega^{sk-n}} \le \lambda_{1,\alpha}^s.
$$

\medskip

\item[v)] Consider the function $A(t)=e^{e^t}- e$. Then $A\in \Delta_2^0$ but $A\notin \Delta_2^\infty$. When $\alpha\gg1$ it holds that
$$
\frac{1}{\|\omega\|_{L^1(\Omega)}} \frac{C}{r_\Omega^{s-n}} \le \lambda_{1,\alpha}^s.
$$
\end{itemize}

In Theorem \ref{teo2}, we also  derive a lower bound for the minimizer using the inverse of the Young function instead of the Matuszewska-Orlicz function. Moreover, in Corollary \ref{coro.1} we prove that the same lower bounds hold for the eigenvalue $\Lambda_{1,\alpha}^s$ when $A\in \Delta_2$.

\begin{thm} \label{teo2.intro}
Assume that $\Omega\subset\mathbb{R}^n$ is a bounded domain with diameter $d_\Omega$ containing the origin. Let $s\in (0,1)$ and let $A$ be a Young function satisfying conditions \eqref{cond2} and $i(A)<\frac{n}{s}$. Given  $\omega\in L^\infty(\mathbb{R}^n)$ and $\alpha>0$, consider $\lambda_{1,\alpha}^s$ as in \eqref{minimi.intro}. Then, there exists a positive constant $C=C(n,s,A)$ such that
$$
\frac{C}{\|\omega\|_{L^\infty(\Omega)} M_A(d_\Omega^s) }\leq \lambda_{1,\alpha}^s.
$$
\end{thm}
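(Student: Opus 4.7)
The plan is to reduce the desired lower bound to a fractional Orlicz Poincar\'e inequality of the form
\begin{equation}\label{plan-poinc}
\int_\Omega A(|u|)\,dx \;\le\; C\, M_A(d_\Omega^s) \iint_{\mathbb{R}^{2n}} A(|D^s u|)\, d\nu_n, \qquad u\in C_c^\infty(\Omega),
\end{equation}
with $C=C(n,s,A)>0$. Once \eqref{plan-poinc} is available, any admissible $u$ in \eqref{minimi.intro} satisfies
$$
\alpha=\int_\Omega \omega A(|u|)\,dx\le \|\omega\|_{L^\infty(\Omega)}\int_\Omega A(|u|)\,dx\le C\|\omega\|_{L^\infty(\Omega)} M_A(d_\Omega^s) \iint_{\mathbb{R}^{2n}} A(|D^s u|)\,d\nu_n;
$$
dividing by $\alpha$ and passing to the infimum delivers the inequality claimed in the theorem.

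To prove \eqref{plan-poinc} I would run a classical \emph{reservoir} argument adapted to the Orlicz setting. Since $0\in\Omega$ and $\diam(\Omega)=d_\Omega$, one has $\Omega\subset B_{d_\Omega}(0)$, so the spherical shell $E:=B_{3d_\Omega}(0)\setminus B_{2d_\Omega}(0)$ is disjoint from $\Omega$, and for $x\in\Omega$, $y\in E$ the triangle inequality yields $d_\Omega\le |x-y|\le 4 d_\Omega$. Because $u$ vanishes on $E$, $|u(x)|=|x-y|^s\,|D^s u(x,y)|$, and applying the defining inequality $A(\alpha t)\le M_A(t)\,A(\alpha)$ of the Matuszewska--Orlicz function with $\alpha=|D^s u(x,y)|$ and $t=|x-y|^s$ gives
$$
A(|u(x)|)\le M_A(|x-y|^s)\,A(|D^s u(x,y)|).
$$
Using the monotonicity and the submultiplicativity of $M_A$ (both immediate from its definition), $M_A(|x-y|^s)\le M_A\!\left((4d_\Omega)^s\right)\le M_A(4^s)\,M_A(d_\Omega^s)$, and the factor $M_A(4^s)$ is absorbed into a constant depending only on $A$ and $s$.

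Multiplying both sides by $|x-y|^{-n}$, integrating in $y$ over $E$ (using the elementary uniform bound $\int_E |x-y|^{-n}\,dy\ge c_n>0$ obtained from $|x-y|\le 4d_\Omega$ and $|E|=\omega_n(3^n-2^n)d_\Omega^n$), then integrating in $x$ over $\Omega$, and finally enlarging the domain of the double integral to $\mathbb{R}^{2n}$, produces \eqref{plan-poinc}. The role of the hypothesis $i(A)<n/s$ together with \eqref{cond2} is auxiliary: they ensure that $M_A(d_\Omega^s)<\infty$, so the right-hand side of \eqref{plan-poinc} is meaningful, and that the variational problem defining $\lambda_{1,\alpha}^s$ is well-posed. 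The most delicate point is verifying the algebraic properties of $M_A$ used above; once those are in hand the proof is routine, and in particular no $\Delta_2$ structural hypothesis on $A$ or its complementary function is needed.
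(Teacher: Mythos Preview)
Your argument is correct and takes a genuinely different route from the paper. The paper proves the theorem by invoking the fractional Orlicz Hardy inequality of Proposition~\ref{hardy} (from \cite{ACPS2}): it bounds $\int_\Omega A(c_1|u_\alpha|/d_\Omega^s)\,dx\le\int_\Omega A(c_1|u_\alpha|/|x|^s)\,dx$ using $0\in\Omega$ and $|x|\le d_\Omega$, controls the latter by the Hardy inequality in terms of $\iint A(|D^s u_\alpha|)\,d\nu_n$, and then applies the Matuszewska--Orlicz ratio trick $A(|u_\alpha|)\le M_A(c_1^{-1}d_\Omega^s)\,A(c_1 d_\Omega^{-s}|u_\alpha|)$ to pass from $A(c_1|u_\alpha|/d_\Omega^s)$ back to $A(|u_\alpha|)$. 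Your reservoir argument with the exterior annulus bypasses Hardy entirely and yields the Poincar\'e inequality \eqref{plan-poinc} directly; it is more elementary and in fact uses strictly less of the hypotheses, since the only structural input you need is $M_A(4^s)<\infty$, which already follows from $i(A)<\infty$ (that is, $A\in\Delta_2$, by Lemma~\ref{lema.m}), whereas the Hardy inequality genuinely requires \eqref{cond2} and $i(A)<n/s$. The paper's approach, on the other hand, makes the origin of $d_\Omega$ transparent (it is the bound on the Hardy weight $|x|^s$) and situates the estimate within the same Hardy machinery used for Theorem~\ref{teo4.intro}.

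One small clarification: your closing remark that ``no $\Delta_2$ structural hypothesis on $A$ \dots\ is needed'' is slightly misleading. The finiteness of $M_A(4^s)$ that you use is \emph{equivalent} to $A\in\Delta_2$ (again Lemma~\ref{lema.m}), so your proof does rely on $A\in\Delta_2$; it is simply hidden inside the hypothesis $i(A)<n/s$. What you correctly do not need is any assumption on the complementary function $\tilde A$.
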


To apply Theorem \ref{teo2.intro}, there is an implicit growth condition: the condition $i(A)<\frac{n}{s}$ is not satisfied when   $A\not\in \Delta_2^0$ or $A\not \in \Delta_2^\infty$, (see Lemma \ref{lema.m} for details). Therefore, this result can be applied only when $A\in \Delta_2$.

Under the  assumption of the $\Delta_2$ condition on $A$, we improve Theorem \ref{teo2.intro} by replacing the diameter with the inner radius.

\begin{thm} \label{teo4.intro}
Let $s\in (0,1)$ and let $A\in \Delta_2$ be a Young function satisfying  \eqref{cond3}. Assume that $\Omega\subset \mathbb{R}^n$ is a bounded Lipschitz domain with inner radius $r_\Omega$.  Given  $\omega\in L^\infty(\mathbb{R}^n)$ and $\alpha>0$, consider  $\lambda_{1,\alpha}^s$ as in \eqref{minimi.intro}.  Then, there exists a positive constant $C=C(n,s,A)$  such that
$$
\frac{C}{  \|\omega\|_{L^\infty(\Omega)} M_A( r_\Omega^s) }\leq \lambda_{1,\alpha}^s.
$$
\end{thm}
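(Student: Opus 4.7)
The plan is to follow the same dimensional blueprint used for Theorem \ref{teo2.intro}, namely bounding the mass constraint from above by the modular of the seminorm, but to gain a factor of $r_\Omega$ rather than $d_\Omega$ by calling on a fractional Orlicz-Sobolev Poincaré–Friedrichs inequality on Lipschitz domains. Start with an arbitrary admissible competitor $u\in C_c^\infty(\Omega)$ with $\int_\Omega \omega A(|u|)\,dx=\alpha$. Since $\omega\in L^\infty$ one trivially has
$$
\alpha \le \|\omega\|_{L^\infty(\Omega)} \int_\Omega A(|u|)\,dx.
$$

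Next, the Matuszewska–Orlicz function enters as the homogeneity replacement. Writing $|u|=r_\Omega^s\cdot(|u|/r_\Omega^s)$ and using the very definition $A(\lambda t)\le M_A(\lambda)A(t)$ yields
$$
\int_\Omega A(|u|)\,dx \le M_A(r_\Omega^s) \int_\Omega A\!\left(\frac{|u|}{r_\Omega^s}\right)dx.
$$
So far the argument is identical to the proof of Theorem \ref{teo2.intro} with $d_\Omega$ replaced by $r_\Omega$; the whole point is that with $d_\Omega$ one can crudely bound $|u|\le d_\Omega^s\cdot(|u|/d_\Omega^s)$ and use extension by zero outside $\Omega$, while with $r_\Omega$ one needs genuine boundary information.

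The main technical input is a fractional Orlicz-Sobolev Poincaré–Friedrichs type inequality for Lipschitz bounded domains: under the $\Delta_2$ condition on $A$ and assumption \eqref{cond3}, there exists $C=C(n,s,A)$ such that, for every $u\in C_c^\infty(\Omega)$,
$$
\int_\Omega A\!\left(\frac{|u|}{r_\Omega^s}\right)dx \le C \iint_{\mathbb R^{2n}} A(|D^s u|)\,d\nu_n.
$$
This is where the Lipschitz hypothesis is used in an essential way (through a covering of $\Omega$ by balls of radius comparable to $r_\Omega$ adapted to the Lipschitz charts, or equivalently through a fractional Hardy-type bound by the distance to the boundary, since $\delta_\Omega(x)\le r_\Omega$ implies $A(|u|/r_\Omega^s)\le A(|u|/\delta_\Omega(x)^s)$) and where the $\Delta_2$ condition is needed to manipulate modulars freely and to guarantee reflexivity of the underlying space $W^{s,A}_0(\Omega)$. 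This inequality will either be established previously in the paper or obtained by combining a fractional Hardy inequality with a comparison between $\delta_\Omega$ and $r_\Omega$; proving it rigorously in the Orlicz setting without relying on homogeneity is the main obstacle.

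Chaining the three displays gives
$$
\alpha \le C\,\|\omega\|_{L^\infty(\Omega)} M_A(r_\Omega^s) \iint_{\mathbb R^{2n}} A(|D^s u|)\,d\nu_n,
$$
so that
$$
\frac{1}{\alpha}\iint_{\mathbb R^{2n}} A(|D^s u|)\,d\nu_n \ge \frac{1}{C\|\omega\|_{L^\infty(\Omega)} M_A(r_\Omega^s)}.
$$
Since $u$ was an arbitrary admissible function, taking the infimum over such $u$ in the definition \eqref{minimi.intro} produces the stated lower bound for $\lambda_{1,\alpha}^s$ with a constant $C(n,s,A)$ independent of $\alpha$. I would close by noting that, in contrast to Theorem \ref{teo2.intro}, here no assumption on $i(A)$ is needed because the full strength of $A\in\Delta_2$ already encodes the growth control necessary for the Poincaré step.
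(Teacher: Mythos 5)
Your proposal is correct and follows essentially the same route as the paper: the key ingredient is exactly the fractional Hardy inequality of Proposition \ref{hardy2} (valid under $A\in\Delta_2$, \eqref{cond3} and the Lipschitz assumption), combined with $\delta_\Omega(x)\le r_\Omega$ and the bound $A(r_\Omega^s t)\le M_A(r_\Omega^s)A(t)$. The only (harmless) difference is that you run the chain of inequalities for an arbitrary admissible competitor and take the infimum, whereas the paper applies it to the minimizer $u_\alpha^s$ via \eqref{eq}; the estimates are identical.
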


\noindent Here we state some notable examples derived from Theorem \ref{teo4.intro}.  

\begin{itemize}
\item[i)] When $A(t)=t^p$, $p>1$, this gives the eigenvalue problem for the fractional $p-$Laplacian, which  is homogeneous. Then for any $\alpha>0$, when $sp<n$:
$$
\frac{1}{\|\omega\|_{L^\infty(\Omega)}}  \frac{C}{ r_\Omega^{sp}}\leq \lambda_{1,\alpha}^s
$$
which in some extent  recovers \eqref{ineq.s1}.

\medskip

\item[ii)]  Given $1<p<q<\infty$, consider $A(t)=\frac{t^p}{p} + \frac{t^q}{q}$. Then $A\in \Delta_2$. This gives the eigenvalue problem for the fractional $p-q-$Laplacian (see for instance \cite{ambrosio}). Then, for $\alpha>0$, when $sq<n$, it holds that
 $$
 \frac{1}{\|\omega\|_{L^\infty(\Omega)}} \frac{C}{ \max\{r_\Omega^{sq}, r_\Omega^{sp}\}}\leq \lambda_{1,\alpha}^s.
 $$
 
 \medskip
  
 \item[iii)] Given $p,q,r\geq 1$,  consider $A(t)=t^p \ln^r (1+t^q)$.  Then $A\in \Delta_2$. Then, for $\alpha>0$, when  $s(p+qr)<n$, it holds that
 $$
 \frac{1}{\|\omega\|_{L^\infty(\Omega)}} \frac{C}{ \max\{r_\Omega^{sp}, r_\Omega^{s(p+qr)}\}}\leq \lambda_{1,\alpha}^s.
 $$
 
 \end{itemize} 
In particular, Theorem \ref{teo2.intro} establishes these same inequalities but with the diameter in place of the inner radius under the same hypothesis on the parameters.

The same lower bounds established  in Theorems \ref{teo2.intro} and \ref{teo4.intro} also hold for the eigenvalue $\Lambda_{1,\alpha}^s$, as stated in Corollary \ref{coro.2}.

\section{Preliminaries}

\subsection{Young functions} \label{sec.young}
A function $A\colon [0,\infty)\to [0,\infty]$ is called a \emph{Young function} if it is convex, non-constant, left-continuous and $A(0)=0$. A function with these properties admits the representation 
$$
A(t)=\int_0^t a(\tau)\,d\tau \qquad \text{ for } t\geq 0,
$$
for some non-decreasing, left-continuous function $a\colon[0,\infty)\to [0,\infty]$. 

The \emph{complementary function} $\tilde A$ of $A$ is the Young function defined as
$$
\tilde A (t)= \sup\{ \tau t-A(\tau)\colon \tau \geq 0\} \qquad \text{ for } t\geq 0.
$$
One has that
$$
t \leq A^{-1}(t) (\tilde A)^{-1}(t) \le 2t \qquad \text{ for } t\geq 0.
$$

\noindent From the convexity of the Young function it is immediate that
\begin{equation} \label{prop.conv}
A(rt)\leq r A(t)\quad  \text { for } 0<r<1, \qquad 
A(rt)\geq r A(t)\quad  \text { for } r>1.
\end{equation}
Moreover, from the integral representation of the Young function it follows that
$$
G(2t) > tg(t), \qquad G(t)\leq t g(t).
$$

\subsubsection{The doubling condition}
A Young function $A\in \Delta_2^\infty$ (or $A\in \Delta_2^0$) if and only if there exists $p>1$ and $T_\infty>0$ (or $T_0>0$) such that
\begin{equation} \label{cte.p}
\frac{ta(t)}{A(t)} \leq p \quad \text{ for all } t\geq T_\infty \quad (\text{or } 0<t\leq T_0).
\end{equation}

\noindent It is easy to see that 

\noindent $A\in \Delta_2^\infty$ if there exists $C_\infty\geq 2$ such that $A(2t)\le C_\infty A(t)$ for all $t\geq T_\infty$.

\noindent $A\in \Delta_2^0$ if there exists $C_0\geq 2$ such that
$A(2t) \leq C_0 A(t)$ for all $t\leq T_0$.

\medskip
\noindent We define $\Delta_2 = \Delta_2^0\cap \Delta_2^\infty$. The following statements are equivalent:
\begin{itemize}
\item[i)] $A\in \Delta_2$
\item[ii)] there exists $p>1$ such that $\frac{ta(t)}{A(t)} \le p$ for all $t>0$
\item[iii)] there exists $C\geq 2$ such that $A(2t) \leq C A(t)$ for all $t>0$.
\end{itemize}

\begin{prop}
Let $A$ be a Young function such that $A\in \Delta_2^0$ and let $p>1$ be the number defined in \eqref{cte.p}. Then
$$
\tau^p A(t) \leq A(t\tau)\leq 	\tau A(t) \quad \text{ for } 0<\tau <1 \text{ and } t<T_0.
$$
Similarly, if $A\in \Delta_2^\infty$ we have that
$$
\tau  A(t) \leq A(t\tau)\leq 	\tau^p A(t) \quad \text{ for } \tau >1 \text{ and } t>T_\infty.
$$
Hence, if $A\in \Delta_2$, then
$$
\min\{\tau,\tau^p\} A(t)\leq A(t\tau) \leq \max\{\tau, \tau^p\} A(t) \quad \text{ for } \tau>0.
$$
\end{prop}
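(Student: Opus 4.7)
The plan is to reduce everything to the single differential inequality $\frac{a(t)}{A(t)} \le \frac{p}{t}$, valid on the appropriate range of $t$, which is exactly the content of \eqref{cte.p}. Writing this as $\frac{d}{dt}\log A(t) \le \frac{p}{t}$ and integrating yields the power-type estimates, while the reverse inequalities come directly from convexity via \eqref{prop.conv}. The convexity of $A$ ensures $A$ is locally absolutely continuous on $(0,\infty)$ with a.e.\ derivative $a$, so the fundamental theorem of calculus applies.

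For the $\Delta_2^0$ case, take $0<\tau<1$ and $t<T_0$. Both $\tau t$ and $t$ lie in $(0,T_0]$, so integrating $\frac{a(r)}{A(r)}\le \frac{p}{r}$ from $r=\tau t$ to $r=t$ gives
$$
\log A(t)-\log A(\tau t)\;\le\; p\bigl(\log t-\log(\tau t)\bigr)\;=\;-p\log\tau,
$$
which rearranges to $A(\tau t)\ge \tau^p A(t)$. The upper bound $A(\tau t)\le \tau A(t)$ is exactly the first inequality in \eqref{prop.conv}. For the $\Delta_2^\infty$ case with $\tau>1$ and $t>T_\infty$, both $t$ and $\tau t$ exceed $T_\infty$, and integrating from $t$ to $\tau t$ gives $A(\tau t)\le \tau^p A(t)$, while the lower bound $A(\tau t)\ge \tau A(t)$ comes from the second inequality in \eqref{prop.conv}.

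For the global statement, I would invoke the equivalence (ii) recalled just above the proposition: $A\in\Delta_2$ means $\frac{ta(t)}{A(t)}\le p$ holds for all $t>0$, so the integration argument now works without the $T_0$ or $T_\infty$ restriction. When $\tau\ge 1$ one gets $\tau A(t)\le A(\tau t)\le \tau^p A(t)$; when $0<\tau\le 1$ one gets $\tau^p A(t)\le A(\tau t)\le \tau A(t)$. In either regime the lower bound equals $\min\{\tau,\tau^p\}A(t)$ and the upper bound equals $\max\{\tau,\tau^p\}A(t)$, as claimed.

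There is no real obstacle; the only mild care needed is to verify that the integrated inequality is legitimate (handled by absolute continuity of $\log A$ on compact subintervals of $(0,\infty)$) and to match the ranges of $\tau t$ and $t$ with the range in which \eqref{cte.p} is available. The restriction $0<\tau<1$ (resp.\ $\tau>1$) is precisely what keeps both endpoints inside $(0,T_0]$ (resp.\ $(T_\infty,\infty)$) in the localized statements.
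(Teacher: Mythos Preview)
Your proof is correct, and in fact the paper states this proposition without proof, treating it as a standard preliminary fact. Your argument---integrating the differential inequality $(\log A)'(t)\le p/t$ obtained from \eqref{cte.p} and pairing it with the convexity bounds in \eqref{prop.conv}---is exactly the standard way this result is established, so there is nothing to compare against and nothing missing.
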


\subsubsection{Ordering  of functions}
A Young function $A$ dominates another Young function $B$ near infinity if there exists a positive constant $c$ and $t_0$ such that
$$
B(t)\leq A(ct) \qquad \text{ for }t\geq t_0.
$$
The functions $A$ and $B$ are called \text{equivalent near infinity} if they dominate each other in the respective range of values of their argument, and we write $A \simeq B$.

$A\approx B$ means that $A$ and $B$ are bounded by each other, up to a multiplicative constant.

\subsection{Matuszewska indexes} \label{sec.matu}
The Matuszewska-Orlicz functions associated to the Young function $A$ are defined as follows:
$$
M(t,A)=\sup_{\alpha>0} \frac{A(\alpha t)}{A(\alpha)}, \qquad M_0(t,A)=\liminf_{\alpha\to 0^+} \frac{A(\alpha t)}{A(\alpha)},\qquad 
M_\infty(t,A)=\liminf_{\alpha\to\infty} \frac{A(\alpha t)}{A(\alpha)}.
$$
These functions are nondecreasing, submultiplicative in the variable $t$ and equal to one at $t=1$. We also consider the Matuszewska-Orlicz indices at zero and infinity, respectively,  defined as
$$
i (A)= \lim_{t\to\infty} \frac{\log M (t,A)}{\log t}, \qquad i_0 (A)= \lim_{t\to \infty} \frac{\log M_0 (t,A)}{\log t}, \qquad i_\infty (A)= \lim_{t\to\infty} \frac{\log M_\infty (t,A)}{\log t}.
$$

It follows from the definitions that $M_0(t,A)\leq M(t,A)$ and $M_\infty(t,A)\leq M(t,A)$, and then $\max\{i_0(A), i_\infty(A)\} \leq i(A)$.

When there is no confusion, we will remove the dependence on $A$.

It is easy to see that the Matuszewska function can be bounded in terms of powers if and only if $A,\tilde A\in \Delta_2$, that is,  
\begin{equation} \label{cota.M}
	\min\{ t^{p^+_A},t^{p^-_A}\}\leq M(t,A)  \leq \max\{t^{p^-_A},t^{p^+_A} \}
\end{equation}
where $p^+_A = \sup_{t>0} \frac{a(t)t}{A(t)}$ and $p^-_A = \inf_{t>0} \frac{a(t)t}{A(t)}$ .

For a comprehensive approach on these functions and indices we refer to the monograph \cite{M89}.

\subsection{Examples of Young functions} \label{ejemplos}

Here we provide for some examples of Young functions and compute their corresponding Matuszewska functions and indexes. For further examples we refer to \cite{M89}.

\medskip

\noindent {\bf Example 1}. Let $p>1$, and assume that 
$$
A(t) \simeq t^p \quad \text{ when } t\ll1.
$$
Then $A\in \Delta_2^0$. In this case we have that $M_0(t)=t^p$ and $i_0(A)=p$. If we assume that
$$
A(t)\simeq t^p \quad \text{ when } t\gg1,
$$
then $A\in \Delta_2^\infty$, $M_\infty(t)=t^p$ and $i_\infty(A)=p$.

\noindent In particular, when $A(t)=t^p$, $M(t,A)=M_0(t,A)=M_\infty(t,A)=t^p$ and $i(A)=i_0(A)=i_\infty(A)=p$.
As a special case, if $1<p<q<\infty$, 
$$
A(t)=\frac{t^p}{p} + \frac{t^q}{q}
$$ 
then 
\begin{align*}
M_0(t,A)=t^{p}, \quad M_\infty(t,A)= t^q, \quad M(t,A)=\max\{t^p, t^q\},
\end{align*}
and $i_0(A)=p$, $i_\infty(A)=i(A)=q$.

\medskip

\noindent {\bf Example 2}. Let $r\geq 0$ and $p\geq 1$, then if 
$$
A(t)\simeq t^p \ln^r t \quad \text{ when } t\ll1
$$
then $A\in \Delta_2^0$ and in this case, $M_0(t) = t^{p}$ and $i_0(A)=p$. If  
$$
A(t)\simeq t^p \ln^r t \quad \text{ when } t\gg1, 
$$
then $M_\infty(t)=t^p$ and $i_\infty(A)=p$.

\noindent As a special case, if $p,q,r\geq 0$ and $A(t)=t^p \ln^r (1+t^q)$ then we have that
\begin{align*}
M_0(t,A)=t^{p+qr}, \quad M_\infty(t,A)= t^p, \quad M(t,A)=\max\{t^p, t^{p+qr}\}
\end{align*}
and  $i(A)=i_0(A)=p+qr$, $i_\infty(A)=p$.

\medskip

\noindent {\bf Example 3}. For $k\in\mathbb{N}$ define $\displaystyle A(t)=e^t - \sum_{j=0}^{k-1} \frac{t^j}{j!}$. Then $A\in \Delta_2^0$ but $A\notin \Delta_2^\infty$, and
\begin{align*}
M_0(t)=t^k, \qquad 
M_\infty(t)=
\begin{cases}
0 &\text{ if } 0<t<1\\
\infty &\text{ if } t>1,
\end{cases}  \qquad 
M(t)=
\begin{cases}
t^k &\text{ if } 0<t\leq 1\\
\infty &\text{ if } t>1,
\end{cases}
\end{align*}
$i_0(A)= k$, $i(A)=i_\infty(A)=\infty$.

\medskip

\noindent {\bf Example 4}. For $r>0$, assume that $A(t)\simeq e^{-t^{-r}}$ for $t\ll1$. Then $A\notin \Delta_2^0$ and 
\begin{align*}
M_0(t)=
\begin{cases}
0 & \text{ if } 0<t<1\\
1 & \text{ if } t=1\\
\infty &\text{ if } t>1.
\end{cases}
\end{align*}
In particular, when $A(t)=e^{-t^{-r}}$,  we have that
\begin{align*}
M_\infty(A)=1, \qquad M_0(t)=
\begin{cases}
0 & \text{ if } 0<t<1\\
1 & \text{ if } t=1\\
\infty &\text{ if } t>1,
\end{cases}
\qquad M(t)=
\begin{cases}
1 & \text{ if } 0<t\leq1\\
\infty &\text{ if } t>1.
\end{cases}
\end{align*}
and
$i_\infty(A)=i(A)=0$, $i_0(A)=\infty$.

\noindent {\bf Example 5}. Assume that $A(t)\simeq e^{e^t}$  for $t\gg1$. Then $A\notin \Delta_2^\infty$ and 
\begin{align*}
M_\infty(t)=
\begin{cases}
0 & \text{ if } 0<t<1\\
1 & \text{ if } t=1\\
\infty &\text{ if } t>1.
\end{cases}
\end{align*}
In particular, when $A(t)=e^{e^t}- e$, since $A(t)\simeq et$ when $t\ll1$, 
\begin{align*}
M_0(t)=t, \qquad 
M_\infty(t)=
\begin{cases}
0 & \text{ if } 0<t<1\\
1 & \text{ if } t=1\\
\infty &\text{ if } t>1,
\end{cases}
\qquad 
M(t)=
\begin{cases}
t & \text{ if } 0<t\leq 1\\
\infty &\text{ if } t>1,
\end{cases}
\end{align*}
and $i_0(A)=1$, $i(A)=i_\infty(A)=\infty$.

\begin{lem} \label{lema.m}
Let $A$ be a Young function such that $A\notin \Delta_2^k$ for $k=0$ or $k=\infty$. Then 
\begin{align*}
M(t)=
\begin{cases}
1 & \text{ if } t= 1\\
\infty & \text{ if } t>1.
\end{cases}
\end{align*}
When $0<t<1$ it holds that $M(t)\leq t$. Moreover, $i(A)=\infty$.
\end{lem}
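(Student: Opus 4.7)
The plan is to split the statement into three pieces: the explicit formula for $M(t)$, the bound $M(t)\le t$ for $t\in(0,1)$, and the value of the index $i(A)$, with the only substantive step being the identification $M(t)=+\infty$ for every $t>1$.

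The case $t=1$ is trivial since $M(1)=\sup_{\alpha>0} A(\alpha)/A(\alpha)=1$. For $t\in(0,1)$, convexity of $A$ combined with $A(0)=0$ gives $A(t\alpha)=A\bigl(t\alpha+(1-t)\cdot 0\bigr)\le tA(\alpha)+(1-t)A(0)=tA(\alpha)$, so $A(t\alpha)/A(\alpha)\le t$ and the supremum in $\alpha$ yields $M(t)\le t$.

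The main work is the case $t>1$. I first treat $t=2$ directly: the failure of the doubling condition means that for every $C\ge 2$ and every threshold there is a point violating $A(2\tau)\le CA(\tau)$, so since $A\notin\Delta_2^k$ for $k=0$ or $k=\infty$ one produces a sequence $\alpha_n$ (tending to $0^+$ when $k=0$, or to $+\infty$ when $k=\infty$) along which $A(2\alpha_n)/A(\alpha_n)\to+\infty$; hence $M(2)=+\infty$. To promote this to arbitrary $t>1$, I use two elementary properties of $M$ which I will verify directly from the definition. First, $M$ is nondecreasing in $t$, because $A$ is nondecreasing. Second, $M$ is submultiplicative, $M(t_1 t_2)\le M(t_1)M(t_2)$: writing $A(t_1 t_2\alpha)/A(\alpha)=\bigl[A(t_1\beta)/A(\beta)\bigr]\cdot\bigl[A(\beta)/A(\beta/t_2)\bigr]$ with $\beta=t_2\alpha$ and taking the supremum over $\alpha$ gives the bound, after observing that $\sup_\beta A(\beta)/A(\beta/t_2)=M(t_2)$ via the substitution $\gamma=\beta/t_2$. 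Given any $t>1$, pick $N\in\mathbb{N}$ with $t^N\ge 2$; monotonicity gives $M(t^N)\ge M(2)=+\infty$, while iterated submultiplicativity gives $M(t^N)\le M(t)^N$. These force $M(t)=+\infty$.

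The index conclusion is then immediate: since $M(t)=+\infty$ for every $t>1$, the ratio $\log M(t)/\log t$ is identically $+\infty$ on $(1,\infty)$, and hence $i(A)=\lim_{t\to\infty}\log M(t)/\log t=+\infty$. The only non-routine step, and the one I would call the main obstacle, is the bootstrap from the failure of the doubling condition at the specific ratio $2$ to every $t>1$; the monotonicity and submultiplicativity of $M$ handle this cleanly, and the rest of the lemma is bookkeeping.
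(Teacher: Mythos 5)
Your proof is correct, but it follows a genuinely more self-contained route than the paper's. The paper disposes of the crucial case $t>1$ by invoking an external result (\cite[Proposition 2.1]{fbs.autov}), which states that failure of $\Delta_2^0$ (resp.\ $\Delta_2^\infty$) forces $M_0(t,A)=\infty$ (resp.\ $M_\infty(t,A)=\infty$) for every $t>1$, and then transfers the blow-up to $M$ via the pointwise comparisons $M_0\le M$ and $M_\infty\le M$; the remaining assertions are handled exactly as you do ($M(t)\le t$ for $0<t<1$ from convexity, i.e.\ \eqref{prop.conv}, and $i(A)=\infty$ because $M\equiv\infty$ on $(1,\infty)$). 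You instead prove the blow-up of $M$ directly from the definition: negating the doubling condition and diagonalizing over the constant and the threshold produces a sequence with $A(2\alpha_n)/A(\alpha_n)\to\infty$ (tending to $0^+$ or $\infty$ according to which condition fails), so $M(2)=\infty$, and then monotonicity plus submultiplicativity, in the form $M(t)^N\ge M(t^N)\ge M(2)$ once $t^N\ge 2$, forces $M(t)=\infty$ for all $t>1$; your verification of submultiplicativity via the substitution $\beta=t_2\alpha$ is sound, and it is in any case a property the paper records in Section \ref{sec.matu}. What the paper's route buys is brevity and the stronger information that already the liminf-type functions $M_0$, $M_\infty$ blow up; what your route buys is an elementary, citation-free argument that uses only the sup-type function $M$, which is all the lemma actually asserts.
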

\begin{proof}
First, observe that from from \eqref{prop.conv}, it holds that $M(t) \leq t\quad$  for  $0<t<1$.

In light of \cite[Proposition 2.1]{fbs.autov}, if $A\notin \Delta_2^0$ or $A\notin \Delta_2^\infty$ then we have that
\begin{align*}
M_k(t)=
\begin{cases}
1 & \text{ if } t=1\\
\infty & \text{ if } t>1,
\end{cases}
\end{align*}
$k=0,\infty$, respectively. Moreover, by definition, we have that $M_0(t)\leq M(t)$ and  $M_\infty(t)\leq M(t)$ for any $t>0$. This gives immediately that when $A\notin \Delta_2^k$ for $k=0$ or $k=\infty$, one has that $M(1)=1$ and $M(t)=\infty$ when $t>1$. 

\noindent Finally, since $M(t)=\infty$ for $t>1$, this gives that $i(A)=\infty$.
\end{proof}

\begin{lem}
If $A\in \Delta_2$ then there exists $p\geq 1$ such that $M(t)=t^p$.
\end{lem}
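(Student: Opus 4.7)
The plan is to exploit the differential characterization of the doubling condition recalled in the list right before the lemma: $A\in\Delta_2$ is equivalent to the existence of some $p>1$ such that $ta(t)/A(t)\le p$ for every $t>0$. This is the key input, since it converts the combinatorial statement $A(2t)\le C A(t)$ into a pointwise bound on the logarithmic derivative of $A$.

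The first step is to turn this pointwise bound into a multiplicative estimate on $A$ itself. Writing $(\log A)'(t)=a(t)/A(t)\le p/t$ and integrating from $\alpha$ to $\alpha t$ for an arbitrary $\alpha>0$ and $t\ge 1$ gives
$$
\log\frac{A(\alpha t)}{A(\alpha)} \le p\log t, \qquad \text{i.e.} \qquad A(\alpha t)\le t^p A(\alpha).
$$
Since this holds uniformly in $\alpha>0$, taking the supremum over $\alpha$ in the definition of $M(t,A)$ yields $M(t,A)\le t^p$ for every $t\ge 1$. For the matching lower estimate, the convexity inequality \eqref{prop.conv} delivers $A(\alpha t)\ge t A(\alpha)$ for $t\ge 1$, hence $M(t,A)\ge t$ on $[1,\infty)$. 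To transport the estimates to $(0,1]$, I would use the submultiplicativity of $M$ together with $M(1)=1$: writing $1=M(t\cdot 1/t)\le M(t)M(1/t)$ and applying the bound $M(1/t)\le (1/t)^p$ for $1/t\ge 1$ yields $M(t)\ge t^p$ on $(0,1]$, while \eqref{prop.conv} gives the opposite inequality. This pins $M$ between two power functions of the same exponent on each side of $1$.

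The main obstacle is identifying the precise exponent $p$ so that equality actually holds. My plan here is to take $p$ to be the Matuszewska index $i(A)=\lim_{t\to\infty}\log M(t)/\log t$: by a Fekete-type argument for the submultiplicative, monotone, normalized function $M$, this limit exists, and under $A\in\Delta_2$ the bound \eqref{cota.M} guarantees $1\le p<\infty$. Verifying that with this choice the chain of inequalities collapses (using that $M$ is already known to be sub- and super-multiplicative, monotone, and $M(1)=1$) yields the announced form $M(t)=t^p$.
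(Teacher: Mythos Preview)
There is a genuine gap in your final paragraph. You correctly obtain the two-sided bounds $t\le M(t)\le t^{p}$ for $t\ge 1$ (and the mirror-image bounds on $(0,1]$), and you correctly note that $M$ is submultiplicative with $M(1)=1$. But the parenthetical claim that $M$ is also \emph{super}-multiplicative is unsupported, and this is exactly the missing ingredient your ``collapse'' step needs. Submultiplicativity together with monotonicity and the normalization $M(1)=1$ does not force a single exponent: the function $M(t)=\max\{t^{p},t^{q}\}$ with $1<p<q$ is nondecreasing, submultiplicative, equals $1$ at $t=1$, and satisfies all of your two-sided bounds, yet is not of the form $t^{r}$. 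This is precisely the Matuszewska function computed in the paper's own Example~1 for $A(t)=t^{p}/p+t^{q}/q\in\Delta_{2}$, so no Fekete-type argument on $\log M$ can close the gap.

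The paper's proof takes a different route---it sets $v(t)=\log M(e^{t})$, asserts that $v$ is \emph{additive}, and then invokes Cauchy's functional equation---but it stumbles at the same place: the displayed computation in the paper establishes only $M(rt)\le M(r)M(t)$, hence only \emph{sub}additivity of $v$, and the example above shows that $v$ need not be linear. So both your argument and the paper's share the same defect, and in view of Example~1 the lemma as stated appears to require a correction (for instance, the two-sided power estimate \eqref{cota.M}, which is what the later proofs actually use).
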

\begin{proof}
Observe that since $A\in \Delta_2$, then there exists $q>1$ such that $A(r t)\leq \max\{t,t^q\} A(r)$ for any $t,r\geq 0$. Then for any $t>0$ it holds that $M$ is finite:
$$
M(t)=\sup_{\alpha>0} \frac{A(\alpha t)}{A(\alpha)} \leq \max\{t,t^{q}\}.
$$
Moreover, observe that
$$
M(rt)=
\sup_{\alpha>0} \frac{A(t r\alpha )}{A(r\alpha)} \frac{A(r\alpha)}{A(\alpha)} \leq 
\sup_{\alpha>0} M(t) \frac{A(r\alpha)}{A(\alpha)} \leq M(t)M(s)
$$
and $M(1)=1$, that is, $M(t)$ is submultiplicative.

Define $v(t)=\ln(M(e^t))$. This function is additive, that is, $v(r+t)=v(r)+v(t)$ for any $s,t\in\mathbb{R}$. It is well known that measurable additive functions are linear, therefore, there exists $p\in \mathbb{R}$ such that $v(t)=pt$ from there $M(t)=t^p$. Finally, from \eqref{prop.conv}, 
\begin{equation*} 
M(t) \leq t\quad  \text{ for } 0<t<1 \quad \text{ and } \quad M(t)\geq t\quad  \text{ for } t>1.
\end{equation*}
which implies that $p\geq 1$. 
\end{proof}

\subsection{Some useful inequalities}
Given $s\in(0,1)$ and a Young function $A$ such that
\begin{equation} \label{cond1}
\int^\infty \left(\frac{t}{A(t)}\right)^\frac{s}{n-s}\,dt<\infty
\end{equation}
consider the Young function $E$ given by
\begin{equation} \label{func.e}
E(t)=t^\frac{n}{n-s}\int_t^\infty \frac{\tilde A(\tau)}{\tau^{1+\frac{n}{n-s}}}\,d\tau\quad \text{ for } t\geq 0.
\end{equation}
Moreover, consider $\Psi_s\colon (0,\infty)\to (0,\infty)$ defined as
$$
\Psi_s(r)=\frac{1}{r^{n-s} E^{-1}(r^{-n})}, \qquad \text{ for } r>0.
$$

The following properties of the function $\Psi_s$ are studied in \cite[Proposition 2.1]{ACPS24}:
\begin{lem} \label{lema1}
Let $s\in (0,1)$ and let $A$ be a Young function. Assume \eqref{cond1}. Then

\begin{itemize}
  \setlength\itemsep{0.5em}
\item[(i)] The function $\Psi_s$ is non-decreasing.

\item[(ii)] Define the Young function $B=\tilde E$. Then
$
\Psi_s(r)\approx r^s B^{-1}(r^{-n}) \text{ for } r>0.
$

\item[(iii)] If $i_\infty(A)>\frac{n}{s}$ then $B\simeq A$ near infinity, and
$
\Psi_s(r) \approx  r^s A^{-1}(r^{-n})  \text{ for } 0<r\leq 1.
$

\item[(iv)] If $i_0(A)>\frac{n}{s}$ then $B\simeq A$ near $0$, and
$
\Psi_s(r) \approx  r^s A^{-1}(r^{-n}) \text{ for } r\geq 1.
$ 
\end{itemize}
 
\end{lem}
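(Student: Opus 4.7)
The plan is to derive (i) and (ii) directly from the definition of $E$ and the universal inverse inequality for Young functions, then reduce (iii) and (iv) to the equivalence $E \simeq \tilde A$ in the appropriate regime via a Karamata-type estimate for the tail integral defining $E(t)$.

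For (i), set $F(t):=\int_t^\infty \tilde A(\tau)\tau^{-1-n/(n-s)}\,d\tau=E(t)/t^{n/(n-s)}$. Since the integrand is nonnegative, $F$ is nonincreasing, i.e.\ $t\mapsto t/E(t)^{(n-s)/n}$ is nondecreasing. Plugging $t=E^{-1}(u)$ shows that $u\mapsto E^{-1}(u)/u^{(n-s)/n}$ is nondecreasing, equivalently $u\mapsto u^{(n-s)/n}/E^{-1}(u)$ is nonincreasing. Writing
\begin{equation*}
\Psi_s(r)=\frac{u^{(n-s)/n}}{E^{-1}(u)}\quad\text{with } u=r^{-n},
\end{equation*}
and using that $r\mapsto r^{-n}$ is decreasing, the monotonicity of $\Psi_s$ in $r$ follows. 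For (ii), I would apply the universal bound $t\le E^{-1}(t)B^{-1}(t)\le 2t$ recorded in the Young-function preliminaries (with $B=\tilde E$) at $t=r^{-n}$, obtaining $E^{-1}(r^{-n})\approx r^{-n}/B^{-1}(r^{-n})$; dividing by $r^{n-s}$ immediately yields $\Psi_s(r)\approx r^{s}B^{-1}(r^{-n})$.

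For (iii) and (iv), by (ii) it suffices to show $B\simeq A$ in the relevant regime, which, by taking complementary Young functions, is the same as $E\simeq \tilde A$ for $t$ large (case (iii)) or $t$ small (case (iv)). A lower bound $E(t)\ge c\,\tilde A(t)$ follows by restricting the tail integral to $\tau\in[t,2t]$ and using monotonicity of $\tilde A$. For the matching upper bound, I would translate the hypothesis on $i_\infty(A)$ or $i_0(A)$ into a control of $\tilde A$ via Matuszewska-index duality: $i_\star(A)>n/s$ forces the upper Matuszewska index of $\tilde A$ at $\star\in\{0,\infty\}$ to be strictly less than $n/(n-s)$, and a Potter-type estimate then provides $\beta<n/(n-s)$ and $C>0$ with $\tilde A(\tau)\le C\,\tilde A(t)(\tau/t)^{\beta}$ throughout the appropriate range. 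Substituting this bound and using $\beta-n/(n-s)<0$ yields
\begin{equation*}
\int_t^\infty \tilde A(\tau)\tau^{-1-n/(n-s)}\,d\tau\le C\,\tilde A(t)\,t^{-n/(n-s)}\int_1^\infty \rho^{\beta-1-n/(n-s)}\,d\rho\le C'\,\tilde A(t)\,t^{-n/(n-s)},
\end{equation*}
so that $E(t)\le C'\,\tilde A(t)$. Combined with (ii), and noting that $r^{-n}$ is large iff $r\le 1$ and small iff $r\ge 1$, this produces $\Psi_s(r)\approx r^{s}A^{-1}(r^{-n})$ on the stated ranges.

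The main obstacle I anticipate is part (iv): the tail integral must be split as $\int_t^1+\int_1^\infty$, the Potter bound applied only on $[t,1]$ where the behavior of $\tilde A$ near $0$ is relevant, and the constant tail $\int_1^\infty$ absorbed into $\tilde A(t)$ using $i_0(A)>n/s$. Apart from this bookkeeping, the core argument is a direct application of Karamata's integral theorem in the $O$-regularly varying framework encoded by the Matuszewska indices.
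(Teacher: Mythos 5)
Your proof is essentially correct, but be aware that the paper itself contains no proof of this lemma: it is quoted from \cite[Proposition 2.1]{ACPS24} (cited immediately before the statement), so what you have written is a self-contained re-derivation of the cited result rather than a variant of an argument appearing in the paper. Parts (i)--(ii) are fine as written: the monotonicity of the tail integral $t\mapsto E(t)\,t^{-n/(n-s)}$ and the inequality $t\le E^{-1}(t)\,\tilde E^{-1}(t)\le 2t$ give exactly what is claimed. For (iii)--(iv) the reduction to $E\simeq\tilde A$ near infinity (resp.\ near zero) is the right move; the two facts you lean on are standard but are precisely where the content lies and should be pinned down or cited: the index duality $1/i_\infty(A)+1/I_\infty(\tilde A)=1$ (and its analogue at zero), where $I$ denotes the upper Matuszewska index defined with a $\limsup$ in place of the paper's $\liminf$, and the uniform Potter-type bound $\tilde A(\tau)\le C\,\tilde A(t)(\tau/t)^{\beta}$ for $\tau\ge t\ge T$ (resp.\ $t\le\tau\le T_0$) whenever $\beta$ exceeds that upper index; both are in \cite{M89}. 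Two pieces of bookkeeping deserve explicit mention: in (iv) the constant tail $\int_{T_0}^{\infty}\tilde A(\tau)\tau^{-1-n/(n-s)}\,d\tau$ is finite because \eqref{cond1} is equivalent to convergence of that integral (this is also what makes $E$ finite at all), and it is absorbed into $\tilde A(t)$ because your Potter bound with $\beta<n/(n-s)$ forces $\tilde A(t)\ge c\,t^{\beta}\ge c\,t^{n/(n-s)}$ for small $t$; and in both (iii) and (iv) the equivalence $B^{-1}\approx A^{-1}$ deduced from $B\simeq A$ holds a priori only for large (resp.\ small) arguments, so covering the full stated ranges $0<r\le 1$ and $r\ge 1$ requires enlarging constants on the intermediate compact range, which is routine since both $\Psi_s(r)$ and $r^{s}A^{-1}(r^{-n})$ are bounded above and away from zero there.
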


The following modular Morrey type inequality is proved in \cite[Remark 4.3]{ACPS24}:
\begin{prop} \label{morrey}
Let $s\in (0,1)$ and let $A$ be a Young function satisfying  \eqref{cond1}. Then, $W^s L^A(\mathbb{R}^n)\subset C^{\Psi_s(\cdot)}(\mathbb{R}^n)$. Moreover,  for any $u\in W^s L^A(\mathbb{R}^n)$ and $x,y\in \mathbb{R}^n$ it holds that
$$
|u(x)-u(y)|\leq C_M |x-y|^s B^{-1}\left( \frac{1}{|x-y|^n} \iint_{\mathbb{R}^{2n}} A(D^s u(z,w))\,d\nu_n(z,w) \right)
$$
for some constant $C_M$ depending on $n$ and $s$, where the Young function $B$ is given by $B(t)=\tilde E(t)$, being $E$ the Young function defined in \eqref{func.e}.
\end{prop}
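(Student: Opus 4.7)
A standard approach for Morrey-type inequalities for fractional Orlicz--Sobolev functions is a chain-of-balls (Campanato) argument coupled with Jensen's inequality at every scale; this is what I would follow. Fix Lebesgue points $x\ne y$ of $u$ and set $r:=|x-y|$. Introduce dyadic balls $B_k^x:=B(x,2^{-k}r)$ and $B_k^y:=B(y,2^{-k}r)$ for $k\ge 0$. By Lebesgue differentiation,
$$
|u(x)-u(y)|\le \sum_{k\ge 0}\big|u_{B_{k+1}^x}-u_{B_k^x}\big|+\big|u_{B_0^x}-u_{B_0^y}\big|+\sum_{k\ge 0}\big|u_{B_{k+1}^y}-u_{B_k^y}\big|,
$$
where $u_B:=\frac{1}{|B|}\int_B u\,dz$. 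The middle term can be absorbed into the $k=0$ summand after enlarging $B_0^x$ to contain $B_0^y$, so it suffices to control one telescoping chain.

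For the $x$-chain with $r_k:=2^{-k}r$, the elementary bound
$$
\big|u_{B_{k+1}^x}-u_{B_k^x}\big|\le \frac{1}{|B_{k+1}^x|\,|B_k^x|}\iint_{B_{k+1}^x\times B_k^x}|u(z)-u(w)|\,dz\,dw,
$$
combined with $|u(z)-u(w)|=|z-w|^s|D^s u(z,w)|$, the change of measure $dz\,dw=|z-w|^n\,d\nu_n$ and the inequality $|z-w|\le 2r_k$, gives
$$
\big|u_{B_{k+1}^x}-u_{B_k^x}\big|\le C\,r_k^{s-n}\iint_{B_k^x\times B_k^x}|D^s u|\,d\nu_n.
$$
A direct polar computation yields $\nu_n(B_k^x\times B_k^x)\asymp r_k^n$, and Jensen's inequality applied to the convex function $A$ against the normalized measure $d\nu_n/\nu_n(B_k^x\times B_k^x)$ then produces
$$
\big|u_{B_{k+1}^x}-u_{B_k^x}\big|\le C\,r_k^s\,A^{-1}\!\Big(\tfrac{C\,\mathcal E}{r_k^n}\Big),\qquad \mathcal E:=\iint_{\mathbb R^{2n}}A(|D^s u|)\,d\nu_n.
$$

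Summing over $k$ and converting the dyadic series into a continuous integral via the substitution $\tau=C\mathcal E/r_k^n$, the resulting expression takes the form (up to constants)
$$
r^s\,\Big(\tfrac{\mathcal E}{r^n}\Big)^{s/n}\!\int_{C\mathcal E/r^n}^{\infty}A^{-1}(\tau)\,\tau^{-1-s/n}\,d\tau.
$$
The Young-conjugate identity $A^{-1}(\tau)\asymp \tau/\tilde A^{-1}(\tau)$ combined with a suitable integration by parts identifies this integral with a multiple of the integral defining $E$ in \eqref{func.e}, and the duality $B=\tilde E$ together with $B^{-1}(t)\,E^{-1}(t)\asymp t$ collapses the entire expression to $C\,r^s B^{-1}(\mathcal E/r^n)$, which is the claimed bound. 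The same argument at $y$ completes the pointwise estimate, and the embedding $W^sL^A(\mathbb R^n)\subset C^{\Psi_s(\cdot)}(\mathbb R^n)$ is immediate from Lemma \ref{lema1}(ii).

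I expect the main technical difficulty to lie in the identification step just described: converting the dyadic telescoping sum into the precise integral defining $E$, and thereby passing from the cruder $A^{-1}$ to the weaker $B^{-1}$. By Lemma \ref{lema1}(iii)--(iv) one has $B\simeq A$ only when $i_\infty(A)>n/s$ or $i_0(A)>n/s$, so a genuinely delicate argument is needed for general $A$. The integrability condition \eqref{cond1} is exactly what guarantees $E(t)<\infty$, hence that $B=\tilde E$ is a bona-fide Young function and that the stated bound is meaningful.
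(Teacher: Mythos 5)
The paper itself contains no proof of Proposition \ref{morrey}: it is quoted from \cite[Remark 4.3]{ACPS24}, so your chain-of-balls argument is necessarily a different route and must stand on its own. Before the main issue, one false but repairable step: $\nu_n(B_k^x\times B_k^x)$ is not comparable to $r_k^n$ --- it is infinite, because $\int_{B}|z-w|^{-n}\,dw$ diverges logarithmically at $w=z$, so Jensen ``against the normalized measure $d\nu_n/\nu_n(B_k^x\times B_k^x)$'' is meaningless as written. The intended termwise bound $|u_{B_{k+1}^x}-u_{B_k^x}|\le C\,r_k^s A^{-1}\left(C\mathcal{E}/r_k^n\right)$, with $\mathcal{E}:=\iint_{\mathbb{R}^{2n}}A(|D^su|)\,d\nu_n$, can still be salvaged by applying Jensen with respect to the normalized Lebesgue measure on $B_{k+1}^x\times B_k^x$ and only afterwards using $|z-w|^n\le (2r_k)^n$ to pass from $dz\,dw$ to $d\nu_n$.

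The decisive gap is exactly the step you flag as the ``main technical difficulty'': it is not technical, it is false in the generality claimed. Your summation produces, up to constants, $r^s\,t^{s/n}\int_t^\infty A^{-1}(\tau)\tau^{-1-s/n}\,d\tau$ at $t=C\mathcal{E}/r^n$, and the identification you invoke amounts to $t^{s/n}\int_t^\infty A^{-1}(\tau)\tau^{-1-s/n}\,d\tau\le C\,B^{-1}(t)$ under \eqref{cond1} alone. Take $A(t)=t^{n/s}\log^{\gamma}(e+t)$ with $\frac{n-s}{s}<\gamma\le\frac{n}{s}$ (for instance $n=1$, $s=\tfrac12$, $\gamma=\tfrac32$): condition \eqref{cond1} holds (it reduces to $\gamma s/(n-s)>1$), so $E$ and $B=\tilde E$ are well defined and $B^{-1}(t)$ is finite, yet $A^{-1}(\tau)\asymp\tau^{s/n}(\log\tau)^{-\gamma s/n}$ makes $\int^\infty A^{-1}(\tau)\tau^{-1-s/n}\,d\tau$ divergent since $\gamma s/n\le 1$; equivalently, your dyadic series $\sum_k r_k^sA^{-1}(C\mathcal{E}/r_k^n)$ diverges while the right-hand side of the proposition is finite. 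The comparability $\Psi_s(r)\approx r^sA^{-1}(r^{-n})$ that would rescue the computation is precisely what requires the extra index hypotheses $i_0(A)>n/s$ or $i_\infty(A)>n/s$ of Lemma \ref{lema1} (iii)--(iv), which Proposition \ref{morrey} does not assume. In other words, applying Jensen with $A$ at every scale irrecoverably loses the sharp target function: a correct proof has to bring the optimal pair $(E,\tilde E)$ in from the start (e.g.\ an Orlicz--H\"older estimate with $B=\tilde E$ applied to an oscillation/potential representation, or the rearrangement machinery of \cite{ACPS24}), which is presumably why the paper simply cites that reference instead of arguing scale by scale through $A^{-1}$.
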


\noindent The following Hardy type inequality is proved in Theorem 5.1 and Proposition C in \cite{ACPS2}. Let $A$ satisfy the following conditions
\begin{equation} \label{cond2}
\int^\infty \left(\frac{t}{A(t)}\right)^\frac{s}{n-s}\,dt=\infty, \qquad 
\int_0 \left(\frac{t}{A(t)}\right)^\frac{s}{n-s}\,dt<\infty.
\end{equation}

\begin{prop} \label{hardy}
Let $s\in (0,1)$ and let $A$ be a Young function satisfying conditions \eqref{cond2} and $i(A)<\frac{n}{s}$. Then for all $u\in C_c^\infty(\mathbb{R}^n)$  it holds that
$$
 \int_{\mathbb{R}^n} A\left(C_{H_1}\frac{|u(x)|}{|x|^s}\right)\,dx \leq (1-s) \iint_{\mathbb{R}^{2n}} A(C_{H_2} |D^s u|) \,d\nu_n
$$
for positive constants $C_{H_1}$ and $C_{H_2}$ depending only on $n$ and $s$.
\end{prop}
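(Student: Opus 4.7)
My plan is to prove Proposition \ref{hardy} by combining a pointwise integral representation of $u(x)$ with a modular Jensen argument and Fubini, in the spirit of the classical proof of fractional Hardy inequalities. The hypothesis $i(A)<\frac{n}{s}$ together with \eqref{cond2} places us in the \emph{subcritical} Orlicz--Sobolev regime (analogous to $sp<n$ in the power case), which is what guarantees that the weight $|x|^{-s}$ is compatible with $A$; in particular, under $A\in\Delta_2$ (implied by $i(A)<\infty$ via Lemma~\ref{lema.m}), estimate \eqref{cota.M} gives $A(ct)\leq\max\{c,c^{p^+_A}\} A(t)$, which I would use to freely absorb multiplicative constants inside $A$ and to justify the two different constants $C_{H_1},C_{H_2}$ in the statement.

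The detailed plan: \textbf{Step 1.} For $u\in C_c^\infty(\mathbb{R}^n)$ and $x\neq 0$, derive the pointwise estimate
$$
\frac{|u(x)|}{|x|^s}\leq C_n\int_{\mathbb{R}^n} |D^s u(x,y)|\,d\mu_x(y),
$$
where $d\mu_x(y)=Z(x)^{-1}|x-y|^{-(n-s)}\chi_{B(x,c|x|)}(y)\,dy$ is a probability measure with $Z(x)\sim |x|^s$. This follows from the bound $|u(x)-u(y)|=|x-y|^s|D^su(x,y)|\leq |x|^s |D^s u(x,y)|$ on the support of $\mu_x$, combined with a representation of $u(x)$ as an average of differences $u(x)-u(y)$ against a suitable test density; for the piece supported near the origin one uses an auxiliary argument to reach the full $\mathbb{R}^n$ integral after averaging over radii. \textbf{Step 2.} Apply Jensen's inequality to the convex function $A$ with respect to $d\mu_x$, yielding the pointwise modular bound
$$
A\!\left(C_{H_1}\frac{|u(x)|}{|x|^s}\right)\leq\int_{\mathbb{R}^n} A(C_{H_2}|D^s u(x,y)|)\,d\mu_x(y).
$$
\textbf{Step 3.} Integrate in $x$ and use Fubini. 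The product measure $d\mu_x(y)\,dx$ becomes $Z(x)^{-1}|x-y|^{-(n-s)}\chi_{B(x,c|x|)}(y)\,dy\,dx$; on the support of the indicator one has $|x-y|^{s-n}\leq |x|^s\cdot |x-y|^{-n}$, which combined with $Z(x)^{-1}\sim |x|^{-s}$ yields a weight dominated by $C|x-y|^{-n}=C\,d\nu_n(x,y)/(dx\,dy)$. A careful scaling argument tracking the $s$-dependence of the normalizing constant $Z(x)$ produces the factor $(1-s)$ advertised in the statement (this is the natural normalization recovering the local inequality as $s\to 1^-$).

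\textbf{Main obstacle.} The delicate step is Step 3: one must verify that the swapped measure is genuinely dominated by $C(1-s)\,d\nu_n$ uniformly in $x,y$, and this requires the precise form of the probability kernel in Step 1. In the Orlicz setting without homogeneity, an extra difficulty is that Jensen does not allow us to pull constants out of $A$ freely, so one must choose the kernel so that the constant appearing inside $A$ on the right is structural — this is why two distinct constants $C_{H_1}\neq C_{H_2}$ appear, and why \eqref{cota.M} (equivalent to $A,\tilde A\in\Delta_2$, which is available since $i(A)<n/s<\infty$) is needed to finally reabsorb constants. The second half of \eqref{cond2}, $\int_0 (t/A(t))^{s/(n-s)}\,dt<\infty$, ensures that the dual/Sobolev conjugate function associated with $A$ behaves well near zero, precluding pathological growth of $A$ at $0$ that would obstruct the integration at small $|x|$, where the weight $|x|^{-s}$ is singular.
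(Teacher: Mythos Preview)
The paper does not prove Proposition~\ref{hardy}; it is quoted from Theorem~5.1 and Proposition~C of~\cite{ACPS2}, as announced in the sentence immediately preceding the statement. There is therefore no in-paper argument to compare against.

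As for your sketch on its own merits, Step~1 is false as written. Take $u\in C_c^\infty(\mathbb{R}^n)$ with $u\equiv 1$ on $B(0,\tfrac12)$, fix $|x|=\tfrac14$, and choose any $c<1$. Then $B(x,c|x|)\subset B(0,\tfrac12)$, so $u(x)-u(y)=0$ for every $y\in\supp\mu_x$; your right-hand side vanishes while the left-hand side $|u(x)|/|x|^s=4^s$ does not. A local average of differences over a ball that never reaches the zero set of $u$ cannot control $|u(x)|$. The phrase ``auxiliary argument to reach the full $\mathbb{R}^n$ integral after averaging over radii'' is precisely the missing content: any honest representation of $u(x)$ in terms of increments must telescope out to where $u$ vanishes, which produces a \emph{sum} over dyadic scales rather than a single probability average, and then Jensen is no longer directly available in the Orlicz setting. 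This is also why conditions~\eqref{cond2} and $i(A)<\tfrac{n}{s}$ are structural---they place one in the subcritical embedding regime of~\cite{ACPS2}, from which the Hardy inequality is derived---rather than merely a device for absorbing constants via $\Delta_2$. A minor additional point: your normalizing constant $Z(x)=\int_{B(x,c|x|)}|x-y|^{-(n-s)}\,dy$ is proportional to $s^{-1}(c|x|)^s$, so it carries a factor $s^{-1}$, not $(1-s)$; the advertised $(1-s)$ would not emerge from that computation either.
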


Given a bounded domain $\Omega\subset \mathbb{R}^n$, we denote $\delta_\Omega(x):=\inf\{|x-y|\colon y \in \Omega^c\}$ the distance from $x$ to $\partial \Omega$. 

The following Hardy type inequality is proved in \cite[Theorem 1.5]{BKS}.
\begin{prop} \label{hardy2}
Let $s\in (0,1)$ and let $\Omega\subset \mathbb{R}^n$ be a bounded Lipschitz  domain. If $A\in \Delta_2$ and
\begin{equation} \label{cond3}
\lim_{k \to \infty}  \sup_{t\geq 0} \frac{A(k t)}{k^\frac{n}{s}A(t)}=0 =  \lim_{k\to 0^+}  \sup_{t\geq 0} \frac{A(k t)}{k^\frac1s A(t)},
\end{equation}
then, there exists a positive constant $C_{H_3}$ such that for all $u\in C^\infty_c(\Omega)$ 
$$
 \int_{\Omega} A\left( \frac{|u(x)|}{\delta_\Omega(x)}\right)\,dx \leq C_{H_3} \iint_{\mathbb{R}^{2n}} A(|D^s u|) \,d\nu_n.
$$

\end{prop}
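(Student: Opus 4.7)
The plan is to follow the standard strategy for fractional Hardy inequalities on bounded Lipschitz domains: flatten $\partial\Omega$ via bi-Lipschitz charts, prove the inequality in the model case of a half-space, and transplant back using a partition of unity. In the Orlicz framework, the two limits in \eqref{cond3} play the role of the sharp exponent range $1/s<p<n/s$ familiar from the power case: the lower limit guarantees the validity of a one-dimensional Hardy step, while the upper limit supplies the fractional Orlicz-Sobolev embedding needed to absorb the localization errors.

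First I would cover $\overline{\Omega}$ by finitely many open balls $B_0,\dots,B_N$ with $\overline{B_0}\subset\Omega$ and, for each $i\geq 1$, a bi-Lipschitz chart $\Phi_i$ that flattens $B_i\cap\partial\Omega$, fixing a smooth partition of unity $\{\eta_i\}$ subordinate to this covering. Because $A\in\Delta_2$, multiplication by the cut-offs $\eta_i$ and composition with $\Phi_i$ alter both the modular $\int A(|u|/\delta_\Omega)\,dx$ and the Gagliardo double integral $\iint A(|D^s u|)\,d\nu_n$ only by multiplicative constants, while $\delta_\Omega$ pulls back to a function comparable to $x_n$ in the image half-ball. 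It is therefore enough to establish the inequality for each $v_i:=\eta_i u$ separately. For the interior piece $v_0$, the weight $1/\delta_\Omega$ is bounded on $\mathrm{supp}\,v_0$, so the LHS reduces to $\int A(|v_0|)\,dx$, which is controlled by the fractional Orlicz-Sobolev embedding whose availability is ensured by the upper condition in \eqref{cond3} (equivalently $i(A)<n/s$).

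For the boundary pieces $v_i$, after flattening I would work in a half-ball of $\mathbb{R}^n_+$ with $\delta_\Omega\sim x_n$, foliate along vertical lines $\{x'\}\times(0,\infty)$, and apply a one-dimensional Orlicz Hardy inequality of order $s$ on each line. The lower assumption $\lim_{k\to 0^+}\sup_{t\geq 0} A(kt)/(k^{1/s}A(t))=0$ is precisely the condition that validates this one-dimensional step. Integrating in the transverse variable $x'$ and bounding the vertical portion of the Gagliardo modular by the full modular via a Fubini argument then yields the half-space Hardy inequality for $v_i$.

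The final step is to absorb the commutator terms produced by the decomposition
$D^s(\eta_i u)(x,y)=\eta_i(x)D^s u(x,y)+u(y)D^s\eta_i(x,y)$.
The $\Delta_2$ condition splits $A$ of this sum; the first summand reproduces the desired $\iint A(|D^s u|)\,d\nu_n$, while the second is controlled using the Lipschitz (hence fractional-Lipschitz) regularity of $\eta_i$ together with the Orlicz-Sobolev embedding of the previous step. The main obstacle is precisely this reconciliation between the two conditions in \eqref{cond3}: the lower one governs the behaviour of $A$ at small arguments where the boundary one-dimensional Hardy step lives, while the upper one governs its behaviour at large arguments where the Sobolev control of the commutator lives; making both cooperate inside a single modular estimate is what forces the full hypothesis \eqref{cond3} and prevents the argument from going through under mere $\Delta_2$.
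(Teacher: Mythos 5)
The paper does not prove this proposition at all: it is imported as an external tool from \cite[Theorem 1.5]{BKS}, so there is no internal argument to compare yours against. Judged on its own, your outline follows the classical route for fractional Hardy inequalities on Lipschitz domains (localize by a partition of unity, flatten by bi-Lipschitz charts, apply a one-dimensional Hardy inequality along vertical lines, reabsorb the commutator terms), and this strategy is in principle viable in the Orlicz setting since $A\in\Delta_2$ lets you move multiplicative constants through $A$. The problem is that the two analytic ingredients that carry all the weight are asserted rather than proved. First, the one-dimensional fractional Orlicz Hardy inequality on the half-line, and the claim that it is ``precisely validated'' by $\lim_{k\to0^+}\sup_{t\geq0}A(kt)/\bigl(k^{1/s}A(t)\bigr)=0$, is the heart of the matter; nothing in the sketch indicates how this hypothesis is actually used, and this is where the real work lies. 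Second, the reduction of the vertical modular $\int_{\mathbb{R}^{n-1}}\int_0^\infty\int_0^\infty A\bigl(|v(x',t)-v(x',\tau)|\,|t-\tau|^{-s}\bigr)\,|t-\tau|^{-1}\,dt\,d\tau\,dx'$ to the full modular $\iint_{\mathbb{R}^{2n}}A(|D^s v|)\,d\nu_n$ is \emph{not} a Fubini argument: pairs of points on the same vertical line form a null set in $\mathbb{R}^{2n}$, so one must insert intermediate points and average over them (the standard directional-seminorm lemma), again invoking $\Delta_2$ and Jensen to handle the resulting constants inside $A$. These steps are standard and fixable, but they are exactly the content of the proposition, so what you have is an outline rather than a proof.

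Two further points. The statement as printed has $\delta_\Omega(x)$ in the denominator, whereas the natural scaling --- and the form actually invoked later in the proof of Theorem \ref{teo4.intro}, as well as the statement in \cite{BKS} --- is $\delta_\Omega(x)^s$; your one-dimensional Hardy inequality ``of order $s$'' would produce the $\delta_\Omega^s$ version, so you are in effect proving a different (and presumably the intended) inequality without flagging the discrepancy. Also, the localization errors coming from $u(y)D^s\eta_i(x,y)$ reduce, after integrating the kernel, to $\int_\Omega A(c|u|)\,dx$, which is controlled by a modular Poincar\'e inequality on the bounded domain available under $\Delta_2$ alone; so the role you assign to the first limit in \eqref{cond3} (an Orlicz--Sobolev embedding to absorb these errors) is not actually forced by your argument, and if that hypothesis is genuinely needed, your sketch does not reveal where it enters.
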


\subsection{Orlicz and Orlicz-Sobolev spaces}
As we mentioned in the introduction of this section, the main reference for Orlicz spaces is the book \cite{KR}. As for Orlicz-Sobolev spaces, the reader can consult, for instance, with \cite{Gossez}.    Fractional order Orlicz-Sobolev spaces, as we will use them here, were introduced in \cite{FBS19} and then further analyze by several authors. The results used in this paper can be  found in \cite{ACPS2, ACPS, FBSp24}.

\subsubsection{Orlicz spaces}
Given a bounded domain $\Omega\subset \mathbb{R}^n$  and a Young function $A$, the \emph{Orlicz class} is defined as
$$
\mathcal{L}^A(\Omega) :=\left\{u\in L^1_\text{loc}(\Omega)\colon \int_\Omega A(|u|)\, dx<\infty\right\}.
$$ 
The \emph{Orlicz space} $L^A(\Omega)$ is defined as the linear hull of $\mathcal{L}^A(\Omega)$ and is characterized as
$$
L^A(\Omega) = \left\{u\in L^1_\text{loc}(\Omega)\colon \text{ there exists } k>0 \text{ such that }  \int_\Omega A\left(\frac{|u|}{k}\right)\, dx<\infty\right\}.
$$
In general the Orlicz class is strictly smaller than the Orlicz space, and $\mathcal{L}^A(\Omega) = L^A(\Omega)$ if and only if $A\in \Delta_2^\infty$. The space $L^A(\Omega)$ is a Banach space when it is endowed, for instance, with the {\em Luxemburg norm}, i.e.
$$
\|u\|_{L^A(\Omega)} = \|u\|_A :=\inf\left\{k>0\colon  \int_\Omega A\left(\frac{|u|}{k}\right)\, dx\le 1\right\}.
$$
\noindent This space $L^A(\Omega)$ turns out to be separable if and only if $A\in \Delta_2^\infty$.

An important subspace of $L^A(\Omega)$ is $E^A(\Omega)$ that it is defined as the closure of the functions in $L^A(\Omega)$ that are bounded. This space is characterized as
$$
E^A(\Omega) = \left\{u\in L^1_\text{loc}(\Omega)\colon  \int_\Omega A\left(\frac{|u|}{k}\right)\, dx<\infty \text{ for all } k>0\right\}.
$$
\noindent This subspace $E^A(\Omega)$ is separable, and we have the inclusions
$$
E^A(\Omega)\subset \mathcal{L}^A(\Omega)\subset L^A(\Omega) 
$$
with equalities if and only if $A\in \Delta_2^\infty$.
Moreover, the following duality relation holds
$$
(E^A(\Omega))^* = L^{\tilde A}(\Omega),
$$
where the equality is understood  via the standard duality pairing. Observe that this automatically implies that $L^A(\Omega)$ is reflexive if and only if $A, \tilde A\in \Delta_2^\infty$.

\subsubsection{Fractional Orlicz-Sobolev spaces}
Given a fractional parameter $s\in (0,1)$, we define the \emph{H\"older quotient} of a function $u\in L^A (\Omega)$ as
$$
D^su(x,y) = \frac{u(x)-u(y)}{|x-y|^s}.
$$
Then, the \emph{fractional Orlicz-Sobolev space} of order $s$ is defined as
$$
W^sL^A(\mathbb{R}^n) := \{u\in L^A(\mathbb{R}^n)\colon D^su\in L^A(\mathbb{R}^{2n}, d\nu_n)\},
$$
where $d\nu_n = |x-y|^{-n}dxdy$ and
$$
W^sE^A(\mathbb{R}^n) := \{u\in E^A(\mathbb{R}^n)\colon D^su\in E^A(\mathbb{R}^{2n}, d\nu_n)\}.
$$
When $A\in \Delta_2$, these spaces coincide and we denote
$$
W^{s, A}(\mathbb{R}^n) =W^sL^A(\mathbb{R}^n)= W^sE^A(\mathbb{R}^n).
$$
The space $W^s L^A(\mathbb{R}^n)$ is reflexive if and only if $A, \tilde  A\in \Delta_2$.

In these spaces the norm considered is
$$
\|u\|_{W^sL^A(\mathbb{R}^n)} = \|u\|_{s, A} = \|u\|_A + [u]_{s,A,\mathbb{R}^n}
$$
with
$$
[u]_{s,A,\mathbb{R}^n}=\inf\left\{k >0 \colon \iint_{\mathbb{R}^{2n}} A\left(\frac{|D^s u(x,y)|}{k} \right)\,d\nu_n \leq 1\right\}.
$$
\noindent Again, with this norm, $W^sL^A(\mathbb{R}^n)$ is a Banach space and $W^sE^A(\mathbb{R}^n)$ is a closed subspace. The space $W^s_0 L^A(\Omega)$ is then defined as the closure of $C^\infty_c(\Omega)$ with respect to the topology $\sigma(W^sL^A(\mathbb{R}^n), W^sE^{\tilde A}(\mathbb{R}^n))$ and $W^s_0 E^A(\Omega)$ as the closure of $C^\infty_c(\Omega)$ in norm topology. 

\section{Eigenvalues and critical points}
Let $A$ be a Young function, and let $\Omega\subset \mathbb{R}^n$ be an open and bounded set. For a fixed normalization parameter $\alpha>0$, we define the {\em critical point} $\lambda_{1,\alpha}^s$ as
\begin{equation} \label{minimi}
\lambda_{1,\alpha}^s = \inf\left\{\frac{1}{\alpha}\iint_{\mathbb{R}^{2n}} A(|D^s  u|)\, d\nu_n \colon u\in C_c^\infty(\Omega), \int_\Omega  \omega A(|u|)\, dx = \alpha\right\}.
\end{equation}
Here, $\omega$ is a suitable positive weight function. We  assume  that  $\omega\in L^1(\mathbb{R}^n)$ when \eqref{cond1} holds, and $\omega \in L^\infty(\mathbb{R}^n)$ when \eqref{cond2} holds.

In \cite{SV22} (see also \cite{S20} when $A\in \Delta_2$) it is proved that \eqref{minimi} is solvable, that is,  there exists a \emph{minimizer} $u_\alpha^s\in W^{s}_0 L^A(\Omega)$ such that $\int_\Omega \omega A(|u_\alpha^s|)\,dx =\alpha$ and
\begin{equation} \label{eq}
\iint_{\mathbb{R}^{2n}} A(|D^s u_\alpha^s|)\,d\nu_n = \lambda_{1,\alpha}^s \int_\Omega \omega A(|u_\alpha^s|)\,dx.
\end{equation}
By applying an appropriate version of the Lagrange multipliers theorem, we can establish the existence of an eigenvalue $\Lambda_{1,\alpha}^s$ with  corresponding eigenfunction $u_\alpha^s$. More precisely,  $u_\alpha$ is a weak solution to the following equation with $\Lambda=\Lambda_{1,\alpha}^s$
\begin{align}\label{eq1}
\begin{cases}
(-\Delta_a)^s u= \Lambda\omega  \frac{a(|u|)}{|u|} u &\quad \text{ in } \Omega\\
u=0& \quad \text{ on } \partial\Omega,
\end{cases}
\end{align}
being $a(t)=A'(t)$ for $t>0$, and where the fractional $a-$Laplacian of order $s\in (0,1)$ is the nonlocal and non-standard growth operator defined as
$$
(-\Delta_a)^s u (x) = \text{p.v.}\, \int_{\mathbb{R}^n} a(|D^s u|)\frac{D^s u}{|D^s u|}\,d\nu_n,
$$
that is, for all $v\in C^\infty_c(\Omega)$ it holds that
$$
\iint_{\mathbb{R}^{2n}} a(|D^s u_\alpha^s|)\frac{D^s u_\alpha^s D^s v}{|D^s u_\alpha^s|}\, d\nu_n = \Lambda_{1,\alpha}^s \int_\Omega \omega a(|u_\alpha^s|) \frac{u_\alpha^s v}{|u_\alpha^s|}\, dx.
$$
\noindent 
We refer also to \cite{SBO, FBSp24} for the existence of higher order eigenvalues.

\begin{lem} \label{lema.comp}
Let $A$ be a Young function such that $A'(t)=a(t)$ for any $t\geq 0$. Then
$$
\frac{1}{p_A}\Lambda_{1,\alpha}^s\leq \lambda_{1,\alpha}^s \leq   p_A \Lambda_{1,\alpha}^s.
$$
The number $p_A:=\sup_{\beta>0} \frac{a(\beta)\beta}{A(\beta)}$ is finite if and only if $A\in \Delta_2$.
\end{lem}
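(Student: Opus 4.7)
The plan is to exploit the fact that both $\lambda_{1,\alpha}^s$ and $\Lambda_{1,\alpha}^s$ are expressed in terms of integrals involving the same minimizer $u_\alpha^s$, and relate the two by the pointwise Young function inequality $A(t)\le t\,a(t)\le p_A A(t)$. The second inequality requires exactly that $p_A<\infty$, which by the characterization (ii) in Section~\ref{sec.young} is equivalent to $A\in\Delta_2$.

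First, I would recall from \eqref{minimi} and \eqref{eq} that the minimizer $u_\alpha^s$ satisfies the constraint $\int_\Omega \omega A(|u_\alpha^s|)\,dx=\alpha$ together with
\[
\iint_{\mathbb{R}^{2n}} A(|D^s u_\alpha^s|)\,d\nu_n = \alpha\,\lambda_{1,\alpha}^s.
\]
Then I would test the weak equation \eqref{eq1} against $v=u_\alpha^s$ itself to obtain
\[
\iint_{\mathbb{R}^{2n}} a(|D^s u_\alpha^s|)\,|D^s u_\alpha^s|\,d\nu_n \;=\; \Lambda_{1,\alpha}^s \int_\Omega \omega\,a(|u_\alpha^s|)\,|u_\alpha^s|\,dx.
\]
This identity is the bridge: on the left the integrand is comparable to $A(|D^s u_\alpha^s|)$, and on the right it is comparable to $\omega A(|u_\alpha^s|)$, with comparison constants dictated by $p_A$.

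Next I would invoke the pointwise inequalities $A(t)\le t\,a(t)$ (already recorded after \eqref{prop.conv}) and $t\,a(t)\le p_A A(t)$ (by the very definition of $p_A$, valid for every $t>0$ exactly when $p_A<\infty$). Applying these to both sides of the identity above and using the constraint and Rayleigh identity for $u_\alpha^s$, I get
\[
\alpha\,\lambda_{1,\alpha}^s \le \iint a(|D^s u_\alpha^s|)|D^s u_\alpha^s|\,d\nu_n = \Lambda_{1,\alpha}^s \int_\Omega \omega\,a(|u_\alpha^s|)|u_\alpha^s|\,dx \le p_A\,\Lambda_{1,\alpha}^s\,\alpha,
\]
which yields $\lambda_{1,\alpha}^s \le p_A \Lambda_{1,\alpha}^s$. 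The reverse chain
\[
p_A\,\alpha\,\lambda_{1,\alpha}^s \ge \iint a(|D^s u_\alpha^s|)|D^s u_\alpha^s|\,d\nu_n = \Lambda_{1,\alpha}^s \int_\Omega \omega\,a(|u_\alpha^s|)|u_\alpha^s|\,dx \ge \Lambda_{1,\alpha}^s\,\alpha
\]
gives $\Lambda_{1,\alpha}^s \le p_A\,\lambda_{1,\alpha}^s$, and combined these are the claim.

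The final assertion that $p_A<\infty \Leftrightarrow A\in\Delta_2$ is not really an obstacle: it is precisely the equivalence between (ii) and (iii) in the list of $\Delta_2$ characterizations stated in Section~\ref{sec.young}. The only subtlety worth noting is that the argument, as given, uses the pointwise upper bound $t\,a(t)\le p_A A(t)$ globally in $t$, so the finite-$p_A$ (i.e.\ global $\Delta_2$) hypothesis is really needed; without it one only obtains the one-sided bound $\Lambda_{1,\alpha}^s\le p_A\lambda_{1,\alpha}^s$ on subsets where $t\,a(t)/A(t)$ is controlled, which would not give a two-sided comparison.
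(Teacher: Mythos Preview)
Your proof is correct and follows essentially the same approach as the paper: both use the minimizer $u_\alpha^s$, the pointwise inequalities $A(t)\le t\,a(t)\le p_A A(t)$, and the ratio representation $\Lambda_{1,\alpha}^s=\dfrac{\iint a(|D^s u_\alpha^s|)|D^s u_\alpha^s|\,d\nu_n}{\int_\Omega \omega\,a(|u_\alpha^s|)|u_\alpha^s|\,dx}$ (which you obtain explicitly by testing \eqref{eq1} with $v=u_\alpha^s$, while the paper simply writes this ratio in the final display). The only minor technicality you might flag is that the weak formulation is stated for $v\in C_c^\infty(\Omega)$, so testing with $u_\alpha^s\in W_0^sL^A(\Omega)$ requires a density argument; the paper does not address this either.
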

\begin{proof}
Given $\alpha>0$, consider the critical point  $\lambda_{1,\alpha}^s$ associated to the minimizing function $u_\alpha^s$ such that $\int_\Omega A(|u_\alpha|) \,dx =\alpha$. Observe that
\begin{align} \label{desii1}
\begin{split}
\int_\Omega \omega A(|u_\alpha^s|)\,dx &\geq \inf_{\beta>0} \frac{A(\beta)}{a(\beta)\beta}\int_\Omega \omega a(|u_\alpha^s|)|u_\alpha^s|\,dx 
=
\frac{1}{p_A}\int_\Omega \omega a(|u_\alpha^s|)|u_\alpha|\,dx.
\end{split}
\end{align}
Moreover, since $a(t)=A'(t)$ is increasing,  $A(t)=\int_0^t a(\tau)\,d\tau \leq a(t)t$ for any $t>0$. This fact, together with \eqref{desii1} gives that
$$
\lambda_{1,\alpha}^s = \frac{\int_\Omega A(|D^s u_\alpha^s|)\,d\nu_n}{\int_\Omega \omega A(|u_\alpha^s|)\,d\nu_n} \leq \frac{\int_\Omega a(|D^s u_\alpha^s|) |D^s u_\alpha^s|\,d\nu_n}{\frac{1}{p_A}\int_\Omega \omega a(|u_\alpha^s|) |u_\alpha^s|\,dx} = p_A \Lambda_{1,\alpha}^s.
$$
The other bound is analogous. Finally, note that $p_A<\infty$ if and only if $A\in \Delta_2$.
\end{proof}

For example,  the number $p_A$ as defined in Lemma \ref{lema.comp}, takes the following form for the following notable Young functions $A\in \Delta_2$.

\begin{itemize}
\item[i)] Let $p>1$ and $A(t)=t^p$ then $p_A=p$.
 
\item[ii)] Let $p,q>1$, $r\geq 0$ and consider $A(t)=\frac{t^p}{p}+ \frac{t^q}{q}$, then $p_A=q$ when $t\geq 1$ and $p_A=p$ when $t<1$.

\item[iii)] Let $p,q>1$, $r\geq 0$ and consider $A(t)=\frac{t^p}{p} \ln^r(1+t^q)$, then $p_A=p+qr$.
 \end{itemize}

\noindent Let us state now some useful relations.
\begin{lem} \label{lema.e}
For $\alpha>0$ let the function $E(\alpha):=\alpha\lambda_{1,\alpha}^s$. Then $E$ is strictly positive, strictly increasing, $E(0)=0$, $E(\infty)=\infty$ and $E$ is a Lipschitz function for $\alpha>0$.

In particular, we have that $\lambda_{1,1}\leq   \alpha \lambda_{1,\alpha}^s$ when $\alpha>1$ and $\alpha \lambda_{1,\alpha}^s\leq \lambda_{1,1}$ when $\alpha<1$.
\end{lem}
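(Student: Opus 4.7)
The plan is to exploit a scaling argument on minimizers of \eqref{minimi} combined with the convexity bound $A(\theta t)\le \theta A(t)$ for $\theta\in(0,1)$ from \eqref{prop.conv}. Given $0<\alpha_1<\alpha_2$, I would pick a minimizer $u_{\alpha_2}^s$ and observe that $\theta\mapsto \int_\Omega \omega A(\theta|u_{\alpha_2}^s|)\,dx$ is continuous on $[0,1]$, vanishes at $\theta=0$ (dominated convergence), and equals $\alpha_2$ at $\theta=1$, so there is $\theta\in(0,1)$ with $\int_\Omega \omega A(\theta|u_{\alpha_2}^s|)\,dx=\alpha_1$. The convexity bound simultaneously yields
$$\alpha_1 \le \theta\alpha_2 \qquad \text{and} \qquad E(\alpha_1)\le \iint A(\theta|D^s u_{\alpha_2}^s|)\,d\nu_n\le \theta E(\alpha_2),$$
hence $\theta\ge \alpha_1/\alpha_2$ and $E(\alpha_1)\le \theta E(\alpha_2)<E(\alpha_2)$; strict positivity $E(\alpha)>0$ then follows from the nontriviality of $u_\alpha^s$, and the explicit bounds $\lambda_{1,1}\le \alpha\lambda_{1,\alpha}^s$ for $\alpha>1$ (and the reverse inequality for $\alpha<1$) are immediate by taking $\alpha_1=1$ or $\alpha_2=1$.

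For the two limits I would first handle $E(0^+)=0$ directly: fix any $v\in C_c^\infty(\Omega)$ with $\int \omega A(|v|)\,dx=1$, pick $\theta_\alpha\in(0,1)$ with $\int \omega A(\theta_\alpha|v|)\,dx=\alpha$, and conclude $E(\alpha)\le \theta_\alpha \iint A(|D^s v|)\,d\nu_n\to 0$ as $\alpha\to 0^+$. For $E(\infty)=\infty$, I would invoke a Poincar\'e-type estimate of the form $\int_\Omega \omega A(|u|)\,dx\le F\!\left(\iint A(|D^s u|)\,d\nu_n\right)$ with $F$ nondecreasing and finite-valued. Under \eqref{cond1} this follows from the Morrey inequality of Proposition \ref{morrey} combined with $\omega\in L^1$; under \eqref{cond2} or \eqref{cond3} it follows from the Hardy inequalities of Propositions \ref{hardy}--\ref{hardy2} together with $\omega\in L^\infty$ and the boundedness of $\Omega$ (using that $|x|$ and $\delta_\Omega$ are bounded on $\Omega$). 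Applied to the minimizer $u_\alpha^s$, this gives $\alpha\le F(E(\alpha))$, so $E(\alpha)\to \infty$ as $\alpha\to\infty$.

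For the local Lipschitz property I would combine both estimates from the first paragraph to obtain
$$0\le E(\alpha_2)-E(\alpha_1)\le (1-\theta)E(\alpha_2) \le \frac{\alpha_2-\alpha_1}{\alpha_2}\,E(\alpha_2)=(\alpha_2-\alpha_1)\,\lambda_{1,\alpha_2}^s.$$
Since $E$ is finite-valued and monotone, $\lambda_{1,\alpha}^s=E(\alpha)/\alpha$ is locally bounded on $(0,\infty)$, so $E$ is Lipschitz on every compact subinterval of $(0,\infty)$. The hardest part of this plan is the Poincar\'e-type estimate needed for $E(\infty)=\infty$: in the general Orlicz setting, and in particular without the $\Delta_2$ condition, one cannot appeal to a classical Poincar\'e inequality and must carefully match the modular bounds of Propositions \ref{morrey}--\ref{hardy2} with the integrability of $\omega$ in each regime.
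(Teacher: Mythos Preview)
Your arguments for strict monotonicity, strict positivity, $E(0^+)=0$, and the comparison with $\lambda_{1,1}$ coincide with the paper's own proof, which likewise scales a minimizer and invokes the convexity bound \eqref{prop.conv}. Your treatment of $E(\infty)=\infty$ via Propositions \ref{morrey}--\ref{hardy2} is more explicit than the paper's (which simply cites \cite[Lemma 4.3]{fbs.autov}) and is sound in each regime.

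Your Lipschitz step, however, contains a sign error. From $E(\alpha_1)\le \theta\,E(\alpha_2)$ one can only deduce
\[
E(\alpha_2)-E(\alpha_1)\;\ge\;(1-\theta)\,E(\alpha_2),
\]
the reverse of what you wrote; the chain $E(\alpha_2)-E(\alpha_1)\le (1-\theta)E(\alpha_2)\le(\alpha_2-\alpha_1)\lambda_{1,\alpha_2}^s$ therefore does not follow. To get an \emph{upper} bound on $E(\alpha_2)-E(\alpha_1)$ by scaling one would have to start from the minimizer $u_{\alpha_1}^s$ and scale \emph{up} by some $\sigma>1$, but then convexity gives $A(\sigma t)\ge \sigma A(t)$, which again points the wrong way, and without $\Delta_2$ there is no bound $A(\sigma t)\le C(\sigma)A(t)$ to fall back on. In fact, for $A\notin\Delta_2$ the minimizer $u_{\alpha_1}^s$ need not satisfy $\iint_{\mathbb{R}^{2n}} A(\sigma|D^s u_{\alpha_1}^s|)\,d\nu_n<\infty$ for any $\sigma>1$, so even right-continuity of $E$ is not obvious from this kind of argument. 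The paper does not attempt a self-contained proof of the Lipschitz property and instead defers to \cite[Theorem~4.5]{fbs.autov}.
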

\begin{proof}
First, given $\beta>0$ and a fixed function $u\in C_c^\infty(\Omega)$ such that $\int_\Omega A(|u_\beta|)\,dx =\beta$, since $A$ is continuous and nondecreasing, if we define the function $\phi\colon \mathbb{R}_+ \to \mathbb{R}_+$ by
$$
\phi(r)=\int_\Omega A(r|u_\beta|)\,dx,
$$
it follows that $\phi$ is continuous, nondecreasing, $\phi(0)=0$, $\phi(1)=\beta$ and $\phi(\infty)=\infty$. Hence, for any $\alpha>0$ there exists $r_\alpha>0$ such that $\phi(r_\alpha)=\alpha$ and in particular
\begin{equation} \label{r.alpha.1}
r_\alpha<1 \text{ when } \alpha < \beta, \qquad r_\alpha>1 \text{ when } \alpha>\beta,
\end{equation}
\begin{equation} \label{r.alpha}
r_\alpha\to 0 \text{ when }\alpha\to 0, \qquad  r_\alpha\to \infty \text{ when }\alpha\to\infty.
\end{equation}

Let us see that $E$ is strictly increasing. Let $0<\alpha<\beta$ and in light of \eqref{eq}, let $u_\beta\in W^s_0 E^A(\Omega)$ be such that 
$$
\int_\Omega A(|u_\beta|)\,dx=\beta, \qquad \lambda_{1,\beta}^s = \frac{1}{\beta}\iint_{\mathbb{R}^{2n}} A(|D^s u_\beta|)\,d\nu_n.
$$
By \eqref{r.alpha.1} there exists $r_\alpha<1$ such that $\int_\Omega A(r_\alpha |u_\beta|)\,dx = \alpha$. Therefore, using the convexity of $A$ we obtain the desired inequality:
$$
\alpha\lambda_{1,\alpha}^s \leq \iint_{\mathbb{R}^{2n}} A(r_\alpha |D^s u_\beta|)\,d\nu_n \leq r_\alpha \iint_{\mathbb{R}^{2n}} A(|D^s u_\beta|)\,d\nu_n <  \beta\lambda_{1,\beta}^s.
$$
Moreover, from the previous inequality together with \eqref{r.alpha} we get that
$$
0\leq \lim_{\alpha\to 0^+}E(\alpha) \leq \lim_{\alpha\to 0^+} r_\alpha \iint_{\mathbb{R}^{2n}} A(|D^s u_\beta|)\,d\nu_n = E(\beta)  \lim_{\alpha\to 0^+} r_\alpha =0,
$$
from where $E(0)=0$. $E(\alpha)$ is lower semicontinuous by \cite[Lemma 4.3]{fbs.autov}, and then $\liminf_{\alpha\to \infty} E(\alpha) \geq \infty$.
 Finally, $E$ is Lipschitz continuous by Theorem 4.5 in \cite{fbs.autov}.
\end{proof}

\begin{prop} \label{autov.compara}
Let $A$ be a Young function and let $\Omega\subset \mathbb{R}^n$ be open and bounded,  let $B_1\subset \mathbb{R}^n$ be a ball  such that $|\Omega|=|B_1|$ and let $B_2\subset \Omega$ be a ball. Then
$$
\lambda_{1,\alpha}^s(B_1) \leq \lambda_{1,\alpha}^s(\Omega) \leq \lambda_{1,\alpha}^s(B_2).
$$
\end{prop}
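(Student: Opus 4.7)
The statement splits cleanly into two claims of different flavor: the right-hand inequality is a standard monotonicity under domain inclusion, while the left-hand inequality is a fractional Orlicz Faber--Krahn type estimate that requires Schwarz symmetrization.

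For the right inequality $\lambda_{1,\alpha}^s(\Omega) \le \lambda_{1,\alpha}^s(B_2)$, I would argue by simple extension of test functions. Since $B_2\subset \Omega$, any $u\in C_c^\infty(B_2)$ extended by zero outside $B_2$ belongs to $C_c^\infty(\Omega)$. Neither the normalization integral $\int_\Omega \omega A(|u|)\,dx$ nor the energy $\iint_{\mathbb{R}^{2n}} A(|D^su|)\,d\nu_n$ is affected by this extension (recall that $D^su$ is computed over all of $\mathbb{R}^{2n}$ and vanishes off the support of $u$), so the admissible class for $\lambda_{1,\alpha}^s(\Omega)$ in \eqref{minimi} contains that for $\lambda_{1,\alpha}^s(B_2)$ and the infimum can only decrease.

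For the left inequality $\lambda_{1,\alpha}^s(B_1) \le \lambda_{1,\alpha}^s(\Omega)$ with $|B_1|=|\Omega|$, I would invoke Schwarz symmetrization. Given admissible $u\in C_c^\infty(\Omega)$, let $u^*$ denote its symmetric decreasing rearrangement; since $|\supp(u)|\leq |\Omega|=|B_1|$, the function $u^*$ is supported in $\overline{B_1}$. Equimeasurability of $|u|$ and $u^*$ gives
$$
\int_{B_1} A(|u^*|)\,dx = \int_\Omega A(|u|)\,dx,
$$
while the fractional Orlicz P\'olya--Szeg\H{o} inequality, which is available in this setting, yields
$$
\iint_{\mathbb{R}^{2n}} A(|D^s u^*|)\,d\nu_n \le \iint_{\mathbb{R}^{2n}} A(|D^s u|)\,d\nu_n.
$$
Dividing by $\alpha$ and taking the infimum over admissible $u$ produces the claim. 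The fact that $u^*$ need not lie in $C_c^\infty(B_1)$ is handled by approximating $u^*\in W^s_0 L^A(B_1)$ by smooth compactly supported functions in the natural $\sigma$-topology and passing to the limit in both the energy and the constraint.

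The main obstacle is the weight $\omega$, which is not assumed radial: the clean identity $\int_{B_1} \omega A(|u^*|)\,dx = \int_\Omega \omega A(|u|)\,dx$ fails in general. I would handle this by either specializing to the case $\omega$ radially symmetric decreasing (or constant), or jointly rearranging $\omega$ to $\omega^*$ and invoking the Hardy--Littlewood rearrangement inequality, and then absorbing any mismatch in the prescribed normalization level through the monotonicity of $\alpha\mapsto \alpha\lambda_{1,\alpha}^s$ established in Lemma \ref{lema.e}.
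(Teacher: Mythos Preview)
Your approach coincides with the paper's: the right-hand inequality is proved by extending test functions (or the minimizer) by zero from $B_2$ to $\Omega$, and the left-hand inequality is proved by Schwarz symmetrization together with the fractional Orlicz P\'olya--Szeg\H{o} principle from \cite[Theorem 3.1]{ACPS}. So the skeleton is identical.

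Where you go further than the paper is in flagging the weight. The paper's own proof silently drops $\omega$: it takes $u$ with $\int_\Omega A(|u|)\,dx=\alpha$ (no weight) and uses the equimeasurability identity $\int_{B_1} A(|u^*|)\,dx=\alpha$, which is only the correct normalization when $\omega\equiv 1$. For a general weight the constraint is $\int_\Omega \omega A(|u|)\,dx=\alpha$, and symmetrization does not preserve it, exactly as you point out. Your proposed remedies are the natural ones, but note that the Hardy--Littlewood route yields $\int_{B_1}\omega^* A(|u^*|)\,dx\ge\alpha$ and simultaneously replaces $\omega$ by $\omega^*$ on $B_1$, so you end up comparing eigenvalues with \emph{different} weights; the monotonicity of $\alpha\mapsto\alpha\lambda_{1,\alpha}^s$ from Lemma~\ref{lema.e} alone does not close that gap. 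The clean statement, and the one the paper's argument actually proves, is for $\omega\equiv 1$ (or $\omega$ radially symmetric decreasing about the center of $B_1$); for general $\omega$ the Faber--Krahn inequality as written need not hold.
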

\begin{proof}
Let $u\in W^s_0 L^A(\Omega)$ be such that $\int_\Omega A(|u|)\,dx =\alpha$. Denote by $u^*$ the symmetric rearrangement of $u$. Thus, $u^*$ is radially decreasing about 0 and is equidistributed with $u$. Using the P\'olya-Szeg\"o principle stated in \cite[Theorem 3.1]{ACPS} we get that
$$
\iint_{\mathbb{R}^{2n}} A(|D^s u^*|)\,d\nu_n \leq \iint_{\mathbb{R}^{2n}} A(|D^s u|)\,d\nu_n.
$$
Hence, if $B_1$ is a ball having the same measure of $\Omega$, since $\int_{B_1} A(|u^*|)\,dx = \alpha$, we get
$$
\lambda_{1,\alpha}^s(B_1) \leq \lambda_{1,\alpha}^s(\Omega).
$$

On the other hand, consider a ball $B_2\subset \Omega $ and the function $u_{B_2} \in W^{s}_0L^A(B_2)$  such that $\int_B A(|u_{B_2}|)\,dx =\alpha$  to be a minimizer for $\lambda_{1,\alpha}^s(B_2)$. Define $v\in W^{1,A}_0(\Omega)$ defined as the extension of $u_{B_2}$ by zero outside $\Omega$. Then $\int_\Omega A(|v|)\,dx=\int_\Omega A(|u_{B_2}|)\,dx =\alpha$ and therefore $v$ is admissible in the variational characterization of $\lambda_{1,\alpha}^s(\Omega)$. Hence
\begin{align*}
\lambda_{1,\alpha}^s(\Omega) &\leq \frac{1}{\alpha}\iint_{\mathbb{R}^{2n}} A(|D^s v|)\,d\nu_n = \lambda_{1,\alpha}^s(B_2),
\end{align*}
which concludes the proof.
\end{proof}

\section{Lower bounds of critical values and eigenvalues}

We start this section with the proof of Theorem \ref{teo1.intro}.

\begin{proof}[Proof of Theorem \ref{teo1.intro}]
Fixed $\alpha>0$, let $u_\alpha^s \in W^s_0 L^A(\Omega)$ be a minimizer of \eqref{minimi} such that $\int_\Omega \omega A(|u_\alpha^s|)\,dx =\alpha$, i.e., the pair $(u_\alpha^s, \lambda_{1,\alpha}^s)$ satisfies equation \eqref{eq}, where  $\lambda_{1,\alpha}^s$ is defined in \eqref{minimi}. Since $s\in(0,1)$ is fixed, for simplicity we will drop the dependence in $s$.

In light of Proposition \ref{morrey},  $u_\alpha$ is continuous and hence there exists $x_0\in \Omega$ such that
$$
|u_\alpha(x_0)|=\max\{|u_\alpha(x)|\colon x \in \mathbb{R}^n\}>0.
$$
From Proposition \ref{morrey} we have that for any $x,y\in \mathbb{R}^n$ it holds that
$$
|u_\alpha(x)-u_\alpha(y)|\leq C_M |x-y|^s B^{-1}\left(  \frac{1}{|x-y|^n} \iint_{\mathbb{R}^{2n}} A(|D^s u_\alpha|) \,d\nu_n\right),
$$
where the Young function $B$ is the complementary function of the Young function defined in \eqref{func.e}. We take $x=x_0$, $y\in \partial\Omega$, from where the previous expression becomes
$$
|u_\alpha(x_0)|\leq C_M |x_0-y|^s B^{-1}\left(  \frac{1}{|x_0-y|^n} \iint_{\mathbb{R}^{2n}} A(|D^s u_\alpha|) \,d\nu_n\right).
$$
Using expression \eqref{eq} and  item (ii) of Lemma \ref{lema1}, since $\Psi_s(r)\approx r^s B^{-1}(r^{-n})$ for all $r>0$, there exists $c_1>0$ depending only on $n$ and $s$ such that
\begin{align}\label{desig0}
\begin{split}
|u_\alpha(x_0)|&\leq C_M |x_0-y|^s B^{-1}\left(  \frac{\lambda_{1,\alpha} }{|x_0-y|^n}  \int_\Omega \omega A(|u_\alpha|)\,dx\right)\\
&= C_M |x_0-y|^s B^{-1}\left(   \frac{ \alpha \lambda_{1,\alpha}}{|x_0-y|^n}\right)\\
&=
C_M (\alpha\lambda_{1,\alpha})^\frac{s}{n} \left((\alpha \lambda_{1,\alpha})^{-\frac{1}{n}} |x_0-y| \right)^s B^{-1}\left(  \left( (\alpha \lambda_{1,\alpha}^{-\frac1n} |x_0-y| \right)^{-n} \right)\\
&\leq  
c_1  C_M \left(\alpha \lambda_{1,\alpha}  \right)^{\frac{s}{n}}\Psi_s \left( |x_0-y| \left( \alpha \lambda_{1,\alpha}^s   \right)^{-\frac{1}{n}}\right).
\end{split}
\end{align}
Moreover, by definition of  inner radius we get that
$$
|x_0-y| \leq \max\{d(x,\partial\Omega)\colon x\in \Omega\} = r_\Omega.
$$
Hence, since   $\Psi_s$ is non-decreasing in light of Lemma \ref{lema1}, inequality \eqref{desig0} yields
\begin{align} \label{eq.u0}
|u_\alpha(x_0)|&\leq c_1 C_M \left( \alpha \lambda_{1,\alpha} \right)^\frac{s}{n} \Psi_s \left(   r_\Omega \left( \alpha \lambda_{1,\alpha}\right)^{-\frac{1}{n}}  \right).
\end{align}
\noindent From Lemma \ref{lema.e}, the function $E(\alpha):=\alpha \lambda_{1,\alpha}^s$ is strictly increasing, positive and Lipschitz continuous for $\alpha>0$, and satisfies that $E(0)=0$, $E(\infty)=\infty$.  Hence, defining $f(\alpha):=r_\Omega E(\alpha)^{-\frac{1}{n}}$, $\alpha>0$,  we get that $f$ is a  strictly decreasing continuous function such that $f(0):=\lim_{\alpha\to 0^+}f(\alpha)=\infty$ and $f(\infty):=\lim_{\alpha\to \infty} f(\alpha)=0$. From these properties there exists $\alpha_0>0$ such that $f(\alpha_0)=1$ and we have that:
$$
f(\alpha)> 1 \quad \text{ when } \alpha< \alpha_0, \qquad f(\alpha)<1 \quad \text{ when } \alpha>\alpha_0.
$$
In particular we have that $f(\alpha)> 1$ when $\alpha\ll1$ and $f(\alpha)<  1$ when $\alpha\gg1$.

\medskip

\noindent {\bf Case $\alpha>\alpha_0$.}
Since  $f(\alpha)< 1$, assuming that $i_\infty(A) >\frac{n}{s}$, by  of Lemma \ref{lema1} (iii) we get 
$$
\Psi_s(f(\alpha)) \approx f(\alpha)^s A^{-1} (f(\alpha)^{-n}).
$$
Then, there exists $c_2>0$ depending only on $n$ and $s$, and  \eqref{eq.u0} gives that
\begin{align*}
|u_\alpha(x_0)|&\leq 
c_1 C_M \left( \alpha \lambda_{1,\alpha} \right)^\frac{s}{n} \Psi_s \left(   r_\Omega \left( \alpha \lambda_{1,\alpha}\right)^{-\frac{1}{n}}  \right)\leq 
C r_\Omega ^s A^{-1} \left(   r_\Omega^{-n}\alpha \lambda_{1,\alpha} \right)
\end{align*} 
being $C=c_1 c_2 C_M$. Moreover, since $A$ is non-decreasing, the previous expression yields
\begin{align} \label{eq.aa}
\begin{split}
A(C^{-1} r_\Omega^{-s}     |u_\alpha(x_0)| )\leq
 r_\Omega^{-n} \alpha \lambda_{1,\alpha}&=
  r_\Omega^{-n} \lambda_{1,\alpha} \int_\Omega \omega A(|u_\alpha(x)|)\,dx\\
&\leq 
r_\Omega^{-n} \lambda_{1,\alpha} A(|u_\alpha(x_0)|) \|\omega\|_{L^1(\Omega)}.
\end{split}
\end{align}
As a consequence, equation \eqref{eq.aa} yields the following:
\begin{align*}
\frac{r_\Omega^n}{\|\omega\|_{L^1(\Omega)}} 
&\leq
\lambda_{1,\alpha}   \frac{A(|u_\alpha(x_0)|)}{A(C^{-1} r_\Omega^{-s}     |u_\alpha(x_0)| )}\leq \lambda_{1,\alpha}  \sup_{t>0}
  \frac{A(t)}{A(C^{-1} r_\Omega^{-s}     t )}\\
  &= \lambda_{1,\alpha}  \sup_{\tau >0}
  \frac{A(C r_\Omega^s \tau)}{A(\tau )} = \lambda_{1,\alpha}  M_A(C r_\Omega^s),
\end{align*}
where $M_A$ is the Matuszewska-Orlicz function associated to $A$ defined in Section \ref{sec.matu}. Since $M$ is submultiplicative, there exists $c>0$ depending on $A$ such that $M_A(Cr_\Omega^s)\leq cM_A(C)M_A(r_\Omega^s)$, and the inequality above leads to the following lower bound for $\lambda_{1,\alpha}$:
$$
 \frac{r_\Omega^n}{cM_A(C)|\omega\|_{L^1(\Omega)}} \frac{1}{M_A(r_\Omega^s)} \le \lambda_{1,\alpha} . 
$$

\medskip

\noindent {\bf Case $\alpha<\alpha_0$.} Here $f(\alpha) > 1$, then assuming  $i_0(A) >\frac{n}{s}$, by Lemma \ref{lema1} (iv) we get that
$$
\Psi_s(f(\alpha)) \approx   f(\alpha)^s A^{-1} (f(\alpha)^{-n}).
$$
Proceeding analogously as in the previous case we get the result.
\end{proof}

A similar argument to the proof of Theorem \ref{teo1.intro} yields a lower bound for the critical value, involving the inverse of $A$ instead of the Matuszewska-Orlicz function $M_A$.
 
\begin{thm}  \label{teo2}
Let $s\in (0,1)$, $\alpha>0$ and let $A$ be a Young function satisfying \eqref{cond1}. Given $\omega \in L^1(\Omega)$ consider the critical value $\lambda_{1,\alpha}^s$  defined in \eqref{minimi}. It holds that
\begin{itemize}
\item[i)] There exists a unique $\alpha_0>0$ satisfying the equation
$$
\alpha_0 \lambda_{1,\alpha_0}^s = r_\Omega^n.
$$

\item[ii)] 
Assume that $i_0(A)>\frac{n}{s}$ when $\alpha\leq \alpha_0$, or $i_\infty(A) >\frac{n}{s}$ when $\alpha> \alpha_0$.  Then, there exists a  $C>0$ depending only on  $s$, $n$ and $A$ such that
\begin{equation} \label{ineq.2}
\frac{r_\Omega^n}{\alpha} A\left( \frac{1}{C r_\Omega ^s}A^{-1}\left( \frac{\alpha}{ \|\omega\|_{L^1(\Omega)}} \right) \right)
\leq 
    \lambda_{1,\alpha}^s.
\end{equation}

\noindent In particular, this holds when $i_0(A)>\frac{n}{s}$ if $\alpha\ll1$ and when $i_\infty(A) >\frac{n}{s}$ if $\alpha\gg1$.
\end{itemize}
\end{thm}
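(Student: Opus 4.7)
The plan is to mimic the proof of Theorem \ref{teo1.intro} up to the pointwise bound on the maximizer, and then diverge at the final step by inverting $A$ directly instead of passing through the Matuszewska--Orlicz function $M_A$.

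Part (i) is immediate: by Lemma \ref{lema.e}, the function $E(\alpha):=\alpha\lambda_{1,\alpha}^s$ is strictly increasing and continuous on $(0,\infty)$ with $E(0^+)=0$ and $E(\infty)=\infty$, so by the intermediate value theorem there is a unique $\alpha_0>0$ with $E(\alpha_0)=r_\Omega^n$. Setting $f(\alpha):=r_\Omega E(\alpha)^{-1/n}$, this threshold exactly distinguishes the regimes $f(\alpha)>1$ (for $\alpha<\alpha_0$) and $f(\alpha)<1$ (for $\alpha>\alpha_0$), which is what triggers the different cases of Lemma \ref{lema1}.

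For (ii), I would take a minimizer $u_\alpha \in W^s_0 L^A(\Omega)$ of \eqref{minimi}, which is continuous by the modular Morrey inequality (Proposition \ref{morrey}), and pick $x_0\in\Omega$ with $|u_\alpha(x_0)|=\|u_\alpha\|_\infty$. Choosing $y\in\partial\Omega$ with $|x_0-y|\le r_\Omega$, combining Proposition \ref{morrey} with \eqref{eq} and Lemma \ref{lema1}(ii), and using monotonicity of $\Psi_s$, I arrive at the very same inequality \eqref{eq.u0}, namely
\begin{equation*}
|u_\alpha(x_0)|\le c_1 C_M (\alpha\lambda_{1,\alpha}^s)^{s/n}\,\Psi_s\bigl(r_\Omega(\alpha\lambda_{1,\alpha}^s)^{-1/n}\bigr).
\end{equation*}
Then, splitting on the sign of $f(\alpha)-1$ and invoking Lemma \ref{lema1}(iii) in the case $\alpha>\alpha_0$ under $i_\infty(A)>n/s$, or Lemma \ref{lema1}(iv) in the case $\alpha<\alpha_0$ under $i_0(A)>n/s$, gives
\begin{equation*}
|u_\alpha(x_0)|\le C\, r_\Omega^{s}\, A^{-1}\!\bigl(r_\Omega^{-n}\alpha\lambda_{1,\alpha}^s\bigr),
\end{equation*}
for a constant $C=C(n,s,A)$.

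The novelty compared with Theorem \ref{teo1.intro} is what I do next. Since $A$ is nondecreasing I apply it to both sides to get
\begin{equation*}
A\!\left(\frac{|u_\alpha(x_0)|}{C r_\Omega^{s}}\right)\le r_\Omega^{-n}\alpha\lambda_{1,\alpha}^s.
\end{equation*}
On the other hand, $\alpha=\int_\Omega \omega A(|u_\alpha|)\,dx\le A(|u_\alpha(x_0)|)\|\omega\|_{L^1(\Omega)}$, hence $A^{-1}(\alpha/\|\omega\|_{L^1(\Omega)})\le |u_\alpha(x_0)|$. Chaining these two inequalities through the monotone map $A(\cdot/Cr_\Omega^s)$ yields exactly \eqref{ineq.2}. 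The only subtle step is the one already delicate in Theorem \ref{teo1.intro}: choosing the right asymptotic index ($i_0$ vs $i_\infty$) so that $\Psi_s(f(\alpha))\approx f(\alpha)^s A^{-1}(f(\alpha)^{-n})$ holds in the relevant range; everything else is monotonicity and the definition of the minimizer, so no further obstacle is expected.
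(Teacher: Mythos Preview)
Your proposal is correct and follows essentially the same route as the paper's proof: the paper also reuses the Morrey bound \eqref{desig0} from the proof of Theorem \ref{teo1.intro}, combines it with $\alpha\le A(|u_\alpha(x_0)|)\|\omega\|_{L^1(\Omega)}$, invokes the same case split via Lemma \ref{lema1}(iii)--(iv), and then inverts $A$ to reach \eqref{ineq.2}. The only difference is a harmless reordering---the paper first substitutes the Morrey bound into the $\alpha$ inequality and then simplifies $\Psi_s$, whereas you first simplify and then chain the two estimates---but the ingredients and logic are identical.
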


\begin{proof}
Fixed $\alpha>0$, let $u_\alpha^s \in W^s_0 L^A(\Omega)$ be a minimizer of \eqref{minimi} such that $\int_\Omega \omega A(|u_\alpha^s|)\,dx =\alpha$, i.e., the pair $(u_\alpha^s, \lambda_{1,\alpha}^s)$ satisfies equation \eqref{eq}, where  $\lambda_{1,\alpha}^s$ is defined in \eqref{minimi}. Since $s\in(0,1)$ is fixed, for simplicity we will drop the dependence in $s$.

\noindent In light of Proposition \ref{morrey}  $u_\alpha$ is  continuous and hence there exists $x_0\in \Omega$ such that
$$
|u_\alpha(x_0)|=\max\{|u_\alpha(x)|\colon x \in \mathbb{R}^n\}>0.
$$
Since
$$
\alpha = \int_\Omega \omega A(|u_\alpha|)\,dx \leq A(|u_\alpha(x_0)|) \|\omega\|_{L^1(\Omega)},
$$
using expression \eqref{desig0} and the fact the $A$ is no decreasing,  we get that
\begin{align*}
\alpha &\leq
A\left( c_1  C_M \left(\alpha \lambda_{1,\alpha}  \right)^{\frac{s}{n}}\Psi_s \left( |x_0-y| \left( \alpha \lambda_{1,\alpha}   \right)^{-\frac{1}{n}}\right) \right) \|\omega\|_{L^1(\Omega)}
\end{align*}
for any $y\in \partial\Omega$, and therefore,  denoting by $r_\Omega$ the inner radius of $\Omega$, we get
\begin{equation} \label{desig11}
\alpha \|\omega\|_{L^1(\Omega)}^{-1}  
\leq
A\left( c_1  C_M \left(\alpha \lambda_{1,\alpha} \right)^{\frac{s}{n}}\Psi_s \left( r_\Omega \left( \alpha \lambda_{1,\alpha}   \right)^{-\frac{1}{n}}\right) \right).
\end{equation}
As in the proof of Theorem \ref{teo1.intro},  there exists $\alpha_0>0$ such that  $r_\Omega^n  \leq \alpha \lambda_{1,\alpha}$ when $\alpha>\alpha_0$ and $r_\Omega^n \geq \alpha \lambda_{1,\alpha}$ when $\alpha<\alpha_0$.

\medskip

{\bf Case }$\alpha>\alpha_0$. Assuming  $i_\infty(A) >\frac{n}{s}$, by  Lemma \ref{lema1} (iii) we get that $\Psi_s(t) \approx t^s A^{-1} (t^{-n})$ for any $t>0$. Then, there exists $c_2=c_2(n,s)>0$ for which  \eqref{desig11} yields
\begin{align*}
\alpha \|\omega\|_{L^1(\Omega)}^{-1}\leq 
A\left( C r_\Omega^s A^{-1} \left(   r_\Omega^{-n}\alpha \lambda_{1,\alpha} \right) \right)
\end{align*}
where $C=c_1c_2C_M$. Since $A$ is non decreasing, the previous expression gives that 
\begin{align*}
\frac{r_\Omega^n}{\alpha} A\left( \frac{1}{C r_\Omega ^s}A^{-1}\left( \frac{\alpha}{ \|\omega\|_{L^1(\Omega)}} \right) \right)
\leq 
    \lambda_{1,\alpha}.
\end{align*} 

{\bf Case }$\alpha>\alpha_0$. Assuming that $i_0(A) >\frac{n}{s}$, the bound follows analogously.
\end{proof}

As a direct consequence of  Theorem \ref{teo1.intro} and Lemma \ref{lema.comp} we get the following.
\begin{cor} \label{coro.1}
Under the assumptions and notation of Theorem \ref{teo1.intro}, if additionally $A\in \Delta_2$, then it holds that
$$
\frac{C}{p_A\|\omega\|_{L^1(\Omega)}} \frac{r_\Omega^n}{M( r_\Omega^s)} \le \Lambda_{1,\alpha}^s 
$$
where  $p_A=\sup_{\beta>0} \frac{a(\beta)\beta}{A(\beta)}$.
\end{cor}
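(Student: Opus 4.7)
The plan is to reduce directly to Theorem \ref{teo1.intro} using the two-sided comparison between the critical value $\lambda_{1,\alpha}^s$ and the eigenvalue $\Lambda_{1,\alpha}^s$ recorded in Lemma \ref{lema.comp}. Since $A\in \Delta_2$, the quantity $p_A = \sup_{\beta>0} \frac{a(\beta)\beta}{A(\beta)}$ is finite, so Lemma \ref{lema.comp} is in full force. Its right-hand inequality $\lambda_{1,\alpha}^s \leq p_A \Lambda_{1,\alpha}^s$ rearranges to
$$
\Lambda_{1,\alpha}^s \geq \frac{1}{p_A}\,\lambda_{1,\alpha}^s,
$$
which is the only transfer from critical values to eigenvalues that is needed.

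Next, I invoke Theorem \ref{teo1.intro}, whose standing hypotheses (condition \eqref{cond1}, together with $i_0(A)>n/s$ when $\alpha\leq \alpha_0$ or $i_\infty(A)>n/s$ when $\alpha>\alpha_0$) are inherited verbatim from the statement of the corollary. Applying it supplies a positive constant $C=C(n,s,A)$ such that
$$
\frac{C}{\|\omega\|_{L^1(\Omega)}}\frac{r_\Omega^n}{M(r_\Omega^s)} \leq \lambda_{1,\alpha}^s.
$$
Combining the two displayed inequalities yields
$$
\frac{C}{p_A\,\|\omega\|_{L^1(\Omega)}} \frac{r_\Omega^n}{M(r_\Omega^s)} \leq \Lambda_{1,\alpha}^s,
$$
which is exactly the asserted bound.

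There is no genuine obstacle: the corollary is essentially a packaging of Theorem \ref{teo1.intro} using Lemma \ref{lema.comp}. The only conceptual point worth emphasizing is that the hypothesis $A\in \Delta_2$ is indispensable precisely because it is the condition ensuring $p_A<\infty$, as characterized in Lemma \ref{lema.comp}; without it, the denominator $p_A$ blows up and the transfer from $\lambda_{1,\alpha}^s$ to $\Lambda_{1,\alpha}^s$ fails to give a nontrivial lower bound. Analogous remarks would yield the companion statement Corollary \ref{coro.2} from Theorems \ref{teo2.intro} and \ref{teo4.intro}.
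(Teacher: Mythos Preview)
Your proof is correct and follows exactly the same approach as the paper, which simply states that the corollary is direct from Theorem \ref{teo1.intro} by using Lemma \ref{lema.comp}. Your write-up just makes explicit the one-line combination $\Lambda_{1,\alpha}^s \geq \frac{1}{p_A}\lambda_{1,\alpha}^s \geq \frac{1}{p_A}\cdot \frac{C}{\|\omega\|_{L^1(\Omega)}}\frac{r_\Omega^n}{M(r_\Omega^s)}$.
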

\begin{proof}
It is direct from Theorem \ref{teo1.intro} by using  Lemma \ref{lema.comp}.
\end{proof}

\begin{proof}[Proof of Theorem \ref{teo2.intro}]
Fixed $\alpha>0$, let $u_\alpha^s \in W^s_0 L^A(\Omega)$ be a minimizer of \eqref{minimi}  such that $\int_\Omega \omega A(|u_\alpha^s|)\,dx =\alpha$, that is, the pair $(u_\alpha^s, \lambda_{1,\alpha}^s)$ satisfies  \eqref{eq},  where  $\lambda_{1,\alpha}^s$ is defined in \eqref{minimi}. 

Denote by $\text{d}_\Omega$ the diameter of $\Omega$. The Hardy inequality given in Proposition \ref{hardy} together with \eqref{eq} and the monotonicity of $A$ gives that
\begin{align} \label{eqh1} 
\begin{split}
\int_{\mathbb{R}^n} A\left(\frac{c_1 |u_\alpha(x)|}{\text{d}_\Omega^s}\right)dx &\leq \int_\Omega A\left(\frac{c_1|u_\alpha(x)|}{|x|^s}\right)dx\\
& \leq (1-s)\iint_{\mathbb{R}^{2n}} A(|D^s u_\alpha|) \,d\nu_n\\
& =(1-s) \lambda_{1,\alpha}^s \int_\Omega \omega A(|u_\alpha|)\,dx
\end{split}
\end{align}
where $c_1=C_{H_1}C_{H_2}^{-1}$, and  $C_{H_1}, C_{H_2}>0$ are the constants given in Proposition \ref{hardy}, which depend only on $n$ and $s$. Now, we compute the following inequality:
\begin{align} \label{eqh2}
\begin{split}
\int_\Omega A(|u_\alpha|)\,dx &= \int_\Omega \frac{A(|u_\alpha|)}{A(c_1 \text{d}_\Omega^{-s}|u_\alpha| ) } A(c_1 \text{d}_\Omega^{-s}|u_\alpha| )\,dx\\
&\leq 
\sup_{t\in (0, \|u\|_\infty)}  \frac{A(t)}{A(c_1 \text{d}_\Omega^{-s} t)}   \int_\Omega A(c_1 \text{d}_\Omega^{-s}|u_\alpha| )\,dx\\
&\leq 
\sup_{\tau >0} \frac{A(c^{-1} d_\Omega^s \tau)}{A(\tau)} \int_\Omega A(c_1 \text{d}_\Omega^{-s}|u_\alpha| )\,dx\\
&= M_A(c^{-1} d_\Omega^s) \int_\Omega A\left(\frac{c_1 |u_\alpha|}{d_\Omega^s} \right)\,dx.
\end{split}
\end{align}
From \eqref{eqh1}, \eqref{eqh2} and the fact that $M_A$ is submultiplicative, we get that
\begin{align*}
1&\leq (1-s)\lambda_{1,\alpha}^s \|\omega\|_{L^\infty(\Omega)}  M_A(c^{-1} d_\Omega^s)\\ 
&\leq 
(1-s)\lambda_{1,\alpha}^s \|\omega\|_{L^\infty(\Omega)}  \tilde c M_A(c^{-1}) M_A(d_\Omega^s)
\end{align*}
with $\tilde c= \tilde c(c,A)$, which concludes the proof.
\end{proof}

As a direct consequence of  Theorem \ref{teo2.intro} and Lemma \ref{lema.comp} we get the following.
\begin{cor} \label{coro.2}
Under the assumptions and notation of Theorem \ref{teo2.intro}, if additionally $A\in \Delta_2$, then it holds that
$$
\frac{C}{p_A\|\omega\|_{L^\infty(\Omega)} M_A(C d_\Omega^s) } \le \Lambda_{1,\alpha}^s
$$
where  $p_A=\sup_{\beta>0} \frac{a(\beta)\beta}{A(\beta)}$.
\end{cor}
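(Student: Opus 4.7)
The plan is to obtain this corollary as an immediate consequence of Theorem \ref{teo2.intro} combined with Lemma \ref{lema.comp}, exactly in the spirit of the argument used for Corollary \ref{coro.1}. Since the hypotheses of Theorem \ref{teo2.intro} already contain the implicit growth condition $i(A)<n/s$, which forces $A\in \Delta_2$ (see Lemma \ref{lema.m}), the finiteness of the constant $p_A=\sup_{\beta>0}a(\beta)\beta/A(\beta)$ is guaranteed, so the bounds between $\lambda_{1,\alpha}^s$ and $\Lambda_{1,\alpha}^s$ provided by Lemma \ref{lema.comp} are meaningful.

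First, I would apply Theorem \ref{teo2.intro} to the critical value $\lambda_{1,\alpha}^s$ to obtain
$$
\frac{C}{\|\omega\|_{L^\infty(\Omega)}\, M_A(d_\Omega^s)}\leq \lambda_{1,\alpha}^s
$$
for a constant $C=C(n,s,A)>0$. Then I would invoke the comparison inequality of Lemma \ref{lema.comp}, namely $\lambda_{1,\alpha}^s \leq p_A\,\Lambda_{1,\alpha}^s$, which is applicable since $A\in\Delta_2$ ensures $p_A<\infty$. Chaining the two inequalities yields
$$
\frac{C}{p_A \|\omega\|_{L^\infty(\Omega)}\, M_A(d_\Omega^s)} \leq \Lambda_{1,\alpha}^s.
$$

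The form with $M_A(C d_\Omega^s)$ in the statement is compatible with the form $M_A(d_\Omega^s)$: since $M_A$ is submultiplicative, $M_A(C d_\Omega^s)\leq M_A(C) M_A(d_\Omega^s)$ (or vice versa after renaming constants), and the factor $M_A(C)$ depends only on $n,s,A$ and can be absorbed into the constant appearing in the numerator. This gives the stated inequality.

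There is no real obstacle here; the work has already been done inside Theorem \ref{teo2.intro} and Lemma \ref{lema.comp}. The only point worth noting explicitly is that the hypothesis $A\in\Delta_2$ is precisely what allows one to transfer the estimate from the critical value to the eigenvalue, because it guarantees that $\lambda_{1,\alpha}^s$ and $\Lambda_{1,\alpha}^s$ are comparable (see Lemma \ref{lema.comp}). Without the $\Delta_2$ condition, $p_A$ would be infinite and the comparison would be void.
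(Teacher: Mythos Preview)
Your proposal is correct and follows exactly the paper's approach: the paper states Corollary \ref{coro.2} as a direct consequence of Theorem \ref{teo2.intro} and Lemma \ref{lema.comp}, with no further argument given. Your additional remarks on why $A\in\Delta_2$ is needed and on absorbing the constant inside $M_A$ are accurate and spell out details the paper leaves implicit.
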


When $A\in \Delta_2$ we can improve Theorem \ref{teo2.intro} by replacing $d_\Omega$ with $r_\Omega$.

\begin{proof}[Proof of Theorem \ref{teo4.intro}]
The proof is analogous to that of Theorem \ref{teo2.intro}, noting that in \eqref{eqh1}, the Hardy inequality stated in Proposition \ref{hardy2}, together with \eqref{eq}, leads to
\begin{align*}  
\begin{split}
\frac{1}{C}\int_{\mathbb{R}^n} A\left(\frac{ |u_\alpha(x)|}{r_\Omega^s}\right)\,dx &\leq   \int_\Omega A\left(\frac{|u_\alpha(x)|}{\delta_\Omega(x)^s}\right)\,dx\\
&\leq \iint_{\mathbb{R}^{2n}} A(|D^s u_\alpha|) \,d\nu_n = \lambda_{1,\alpha}^s \int_\Omega \omega A(|u_\alpha|)\,dx,
\end{split}
\end{align*}
where $\delta_\Omega(x)$ denotes the distance from $x$ to $\partial \Omega$,  giving the desired result.
\end{proof}

\subsection*{Data availability}
The authors state that no data were generated or analyzed in this paper, so data availability is not applicable.

\subsection*{Statements and Declarations}
The authors have no conflicts of interest to declare that are relevant to the content of this article.


\begin{thebibliography}{00}

\bibitem{ACPS2}
A. Alberico, A. Cianchi, L. Pick, and L. Slav\'{\i}kov\'a, \emph{Fractional Orlicz-Sobolev embeddings}, J. Math. Pures Appl. (9) \textbf{149} (2021), 216--253. MR 4239001.

\bibitem{ACPS}
A. Alberico, A. Cianchi, L. Pick, and L. Slav\'{\i}kov\'a, \emph{On fractional Orlicz-Sobolev spaces}, Harmonic analysis and partial differential equations—in honor of Vladimir Maz’ya, Birkh\"{a}user/Springer, Cham,  2023, pp. 45--65. MR 4592608.

\bibitem{ACPS24}
A. Alberico, A. Cianchi, L. Pick, and L. Slav\'{\i}kov\'a, \emph{On the modulus of continuity of fractional Orlicz-Sobolev functions}, Mathematische Annalen (2024): 1-49.

\bibitem{ABC}
C.   Alves, S. Bahrouni, and M.  Carvalho, \emph{Multiple solutions for two classes of quasilinear problems defined on a nonreflexive Orlicz-Sobolev space}, Ark. Mat. \textbf{60} (2022), no. 1, 1--22. MR 4423268.

\bibitem{ambrosio}
V. Ambrosio, \emph{Fractional $p\&q$ Laplacian problems in $\mathbb{R}^N$ with critical growth}, Z. Anal. Anwend. \textbf{39} (2020), no. 3, 289--314. MR 4122478.

\bibitem{AL}
A. Anane, \emph{Simplicit\'{e} et isolation de la premi\`{e}re valeur propre du $p$-laplacien avec poids}, C. R. Acad. Sci. Paris S\'{e}r. I Math. \textbf{305} (1987), no. 16, 725--728. MR 920052.

\bibitem{srati}
E. Azroul, A. Benkirane, and M. Srati, \emph{Nonlocal eigenvalue type problem in fractional Orlicz-Sobolev space}, Adv. Oper. Theory \textbf{5} (2020), no. 4, 1599--1617. MR 4134118.

\bibitem{SBO}
S. Bahrouni, H. Ounaies, and A. Salort, \emph{Variational eigenvalues of the fractional $g$-Laplacian}, Complex Var. Elliptic Equ. \textbf{68} (2023), no. 6, 1021--1044. MR 4596303.

\bibitem{BS21}
S. Bahrouni and A. Salort, \emph{Neumann and Robin type boundary conditions in fractional Orlicz-Sobolev spaces}, ESAIM Control Optim. Calc. Var. \textbf{27} (2021), Paper No. S15, 23. MR 4222173.

\bibitem{BKS}
K. Bal, K. Mohanta, P. Roy, and F. Sk, \emph{Hardy and Poincar\'{e} inequalities in fractional Orlicz-Sobolev spaces}, Nonlinear Anal. \textbf{216} (2022), Paper No. 112697, 22. MR 4346012.

\bibitem{BKO23}
S.S. Byun,  H. Kim, and J. Ok, \emph{Local H\"older continuity for fractional nonlocal equations with general growth}. Mathematische Annalen, 387(1-2), 807-846. (2023)



 

\bibitem{C2}
J. Chaker, M. Kim, and M. Weidner, \emph{Regularity for nonlocal problems with non-standard growth}, Calc. Var. Partial Differential Equations \textbf{61} (2022), no. 6, Paper No. 227, 31. MR 4496500.

\bibitem{C1}
J. Chaker, M. Kim, and M. Weidner, \emph{Harnack inequality for nonlocal problems with non-standard growth}, Math. Ann. \textbf{386} (2023), no. 1-2, 533--550. MR 4585156.

\bibitem{choi}
Q.-H. Choi and T. Jung, \emph{Fractional $N$-Laplacian boundary value problems with jumping nonlinearities in the fractional Orlicz-Sobolev spaces}, Bound. Value Probl. (2021), Paper No. 100, 27. MR 4348132.

\bibitem{C}
M. Cuesta, \emph{Eigenvalue problems for the $p$-Laplacian with indefinite weights}, Electron. J. Differential Equations (2001), No. 33, 9. MR 1836801.

\bibitem{DNP}
P.  de N\'{a}poli and J. P. Pinasco, \emph{Lyapunov-type inequalities for partial differential equations}, J. Funct. Anal. \textbf{270} (2016), no. 6, 1995--2018. MR 3460232.

\bibitem{pdnliapu}
P.  de N\'{a}poli and J. P. Pinasco, \emph{A Lyapunov inequality for monotone quasilinear operators}, Differential Integral Equations \textbf{18} (2005), no. 10, 1193--1200. MR 2162629.

\bibitem{E}
\'{A}. Elbert, \emph{A half-linear second order differential equation}, Qualitative theory of differential equations, Vol. I, II (Szeged, 1979), Colloq. Math. Soc. J\'{a}nos Bolyai, vol. 30, North-Holland, Amsterdam-New York, 1981, pp. 153--180. MR 680591.
 
\bibitem{fbs.autov}
 J. Fern\'andez Bonder and A. Salort, \emph{On the first eigenvalue of the generalized Laplacian}, 2024.
 
\bibitem{FBSp24}
 J. Fern\'andez Bonder and J.  Spedaletti, \emph{Ljusternik–Schnirelmann eigenvalues for the fractional $m$-Laplacian without the $\Delta_2$ condition}, Commun. Pure Appl. Anal. 24 (2025), no. 6, 1094--1117.
 
\bibitem{FBPLS}
J. Fern\'andez Bonder, M. P\'{e}rez-Llanos, and A.  Salort, \emph{A H\"{o}lder infinity Laplacian obtained as limit of Orlicz fractional Laplacians}, Rev. Mat. Complut. \textbf{35} (2022), no. 2, 447--483. MR 4423931.

\bibitem{FBS19}
J. Fern\'andez Bonder and A. Salort, \emph{Fractional order Orlicz-Sobolev spaces}, J. Funct. Anal. \textbf{277} (2019), no. 2, 333--367. MR 3952156.

\bibitem{FBSV2}
J. Fern\'andez Bonder, A. Salort, and H. Vivas, \emph{Interior and up to the boundary regularity for the fractional $g$-Laplacian: the convex case}, Nonlinear Anal. \textbf{223} (2022), Paper No. 113060, 31. MR 4451441.

\bibitem{FBSV23}
J. Fern\'andez Bonder, A. Salort, and H. Vivas, \emph{Global H\"{o}lder regularity for eigenfunctions of the fractional $g$-Laplacian}, J. Math. Anal. Appl. \textbf{526} (2023), no. 1, Paper No. 127332, 15. MR 4580443.

\bibitem{eig1}
M. Garc\'{\i}a-Huidobro, V. K. Le, R. Man\'{a}sevich, and K. Schmitt, \emph{On principal eigenvalues for quasilinear elliptic differential operators: an Orlicz-Sobolev space setting}, NoDEA Nonlinear Differential Equations Appl. \textbf{6} (1999), no. 2, 207--225. MR 1694787.

\bibitem{eig2}
J.-P. Gossez and R. Man\'{a}sevich, \emph{On a nonlinear eigenvalue problem in Orlicz-Sobolev spaces}, Proc. Roy. Soc. Edinburgh Sect. A \textbf{132} (2002), no. 4, 891--909. MR 1926921.

\bibitem{Gossez}
J.-P. Gossez, \emph{Nonlinear elliptic boundary value problems for equations with rapidly (or slowly) increasing coefficients}, Trans. Amer. Math. Soc. \textbf{190} (1974), 163--205. MR 342854.

\bibitem{JKS}
M. Jleli, M. Kirane, and B. Samet, \emph{Lyapunov-type inequalities for fractional partial differential equations}, Appl. Math. Lett. \textbf{66} (2017), 30--39. MR 3583856.

\bibitem{KR}
M. A. Krasnoselski\u{\i} and Ja. B. Ruticki\u{\i}, \emph{Convex functions and Orlicz spaces}, Russian ed., P. Noordhoff Ltd., Groningen, 1961. MR 126722.

\bibitem{LYHA}
C.-F. Lee, C.-C. Yeh, C.-H. Hong, and R. P. Agarwal, \emph{Lyapunov and Wirtinger inequalities}, Appl. Math. Lett. \textbf{17} (2004), no. 7, 847--853. MR 2072845.

\bibitem{L1} P. Lindqvist, \emph{On the equation $div(|\nabla u|^{p-2}\nabla u)) + \lambda |u|^{p-2}u=0$}. Proceedings of the American Mathematical Society, 157-164. (1990).

\bibitem{L2} P. Lindqvist, \emph{On a nonlinear eigenvalue problem}. Topics in mathematical analysis, 3, 175-203. (1995)

\bibitem{M89}
L. Maligranda, \emph{Orlicz spaces and interpolation}, Semin\'{a}rios de Matem\'{a}tica [Seminars in Mathematics], vol. 5, Universidade Estadual de Campinas, Departamento de Matem\'{a}tica, Campinas, 1989. MR 2264389.

\bibitem{P}
B. G. Pachpatte, \emph{Lyapunov type integral inequalities for certain differential equations}, Georgian Math. J. \textbf{4} (1997), no. 2, 139--148. MR 1439592.

\bibitem{Pi}
J. P. Pinasco, \emph{Lower bounds for eigenvalues of the one-dimensional $p$-Laplacian}, Abstr. Appl. Anal. (2004), no. 2, 147--153. MR 2058270.

\bibitem{S20}
A. Salort, \emph{Eigenvalues and minimizers for a non-standard growth non-local operator}, J. Differential Equations \textbf{268} (2020), no. 9, 5413--5439. MR 4066053.

\bibitem{S22}
A. Salort, \emph{Lower bounds for Orlicz eigenvalues}, Discrete Contin. Dyn. Syst. \textbf{42} (2022), no. 3, 1415--1434. MR 4385762.

\bibitem{SV22}
A. Salort and H. Vivas, \emph{Fractional eigenvalues in Orlicz spaces with no $\Delta_2$ condition}, J. Differential Equations \textbf{327} (2022), 166--188. MR 4413298.

\bibitem{vergara}
J. S\'{a}nchez and V. Vergara, \emph{A Lyapunov-type inequality for a $\psi$-Laplacian operator}, Nonlinear Anal. \textbf{74} (2011), no. 18, 7071--7077. MR 2833695.

\bibitem{SL}
I. Sim and Y.-H. Lee, \emph{Lyapunov inequalities for one-dimensional $p$-Laplacian problems with a singular weight function}, J. Inequal. Appl. (2010), Art. ID 865096, 9. MR 2592854.

\end{thebibliography}

\end{document}